\documentclass[a4paper,reqno,11pt]{amsart}
\usepackage{amssymb}
\usepackage{pifont} 
\usepackage{bbding}
\usepackage{wasysym}
\usepackage{ifthen}
\usepackage{cite}
 \usepackage[dvips]{graphicx}

\nonstopmode \numberwithin{equation}{section}
\usepackage{amssymb}
\usepackage{ifthen}
\usepackage{graphicx}
\usepackage{amsmath}
\usepackage[T1]{fontenc} 
\usepackage[utf8]{inputenc}
\usepackage[usenames,dvipsnames]{color}
\usepackage{color}
\usepackage[english]{babel}
\usepackage{fancyhdr}
\usepackage{fancybox}
\usepackage{tikz}

\nonstopmode \numberwithin{equation}{section}
\theoremstyle{plain}

\newtheorem{conj}{Conjecture}

\theoremstyle{definition}

\newtheorem{thm}{Theorem}[section]
\newtheorem{prob}{Problem}[section]
\newtheorem{cor}{Corollary}[section]

\newtheorem{prop}{Proposition}[section]
\newtheorem{rem}{Remark}[section]
\newtheorem{lem}{Lemma}[section]

\newcounter{minutes}
\divide\time by 60
\newcounter{hours}
\multiply\time by 60
\addtocounter{minutes}{-\time}

\newcounter {own}
\def\theown {\thesection       .\arabic{own}}

\newenvironment{pf}[1][]{%
 \vskip 3mm
 \noindent
 \ifthenelse{\equal{#1}{}}%
  {{\slshape Proof. }}%
  {{\slshape #1.} }%
 }%
{\qed\bigskip}

\newcounter{alphabet}

\def\be{\begin{equation}}
\def\ee{\end{equation}}

\newcommand{\bee}{\begin{enumerate}}
\newcommand{\eee}{\end{enumerate}}

\newcommand{\blem}{\begin{lem}}
\newcommand{\elem}{\end{lem}}
\newcommand{\bthm}{\begin{thm}}
\newcommand{\ethm}{\end{thm}}
\newcommand{\bcor}{\begin{cor}}
\newcommand{\ecor}{\end{cor}}
\newcommand{\beg}{\begin{examp}}
\newcommand{\eeg}{\end{examp}}
\newcommand{\begs}{\begin{examples}}
\newcommand{\eegs}{\end{examples}}

\newcommand{\bdefn}{\begin{defn}}
\newcommand{\edefn}{\end{defn}}

\newcommand{\bprob}{\begin{prob}}
\newcommand{\eprob}{\end{prob}}
\newcommand{\bei}{\begin{itemize}}
\newcommand{\eei}{\end{itemize}}

\newcommand{\bcon}{\begin{conj}}
\newcommand{\econ}{\end{conj}}
\newcommand{\bcons}{\begin{conjs}}
\newcommand{\econs}{\end{conjs}}
\newcommand{\bprop}{\begin{prop}}
\newcommand{\eprop}{\end{prop}}
\newcommand{\br}{\begin{rem}}
\newcommand{\er}{\end{rem}}
\newcommand{\brs}{\begin{rems}}
\newcommand{\ers}{\end{rems}}
\newcommand{\bo}{\begin{obser}}
\newcommand{\eo}{\end{obser}}
\newcommand{\bos}{\begin{obsers}}
\newcommand{\eos}{\end{obsers}}
\newcommand{\bpf}{\begin{pf}}
\newcommand{\epf}{\end{pf}}
\newcommand{\ba}{\begin{array}}
\newcommand{\ea}{\end{array}}
\newcommand{\beq}{\begin{eqnarray}}
\newcommand{\beqq}{\begin{eqnarray*}}
\newcommand{\eeq}{\end{eqnarray}}
\newcommand{\eeqq}{\end{eqnarray*}}

\begin{document}

\title{Sharp Coefficient and Inverse Problems for Holomorphic Semigroup Generators}

\author{Sanju Mandal}
\address{Sanju Mandal, Department of Mathematics, Jadavpur University, Kolkata-700032, West Bengal,India.}
\email{sanju.math.rs@gmail.com, sanjum.math.rs@jadavpuruniversity.in}

\author{Molla Basir Ahamed$^*$}
\address{Molla Basir Ahamed, Department of Mathematics, Jadavpur University, Kolkata-700032, West Bengal,India.}
\email{mbahamed.math@jadavpuruniversity.in}

\thanks{*~Corresponding author email: mbahamed.math@jadavpuruniversity.in}

\subjclass[{AMS} Subject Classification:]{Primary 30C45; 30C50 Secondary 37F10}
\keywords{Holomorphic generators, Semigroup generators, Univalent functions, Logarithmic coefficients, successive coefficient difference, Fekete–Szeg\"o functional}

\def\thefootnote{}
\footnotetext{ {\tiny File:~\jobname.tex,
printed: \number\year-\number\month-\number\day,
          \thehours.\ifnum\theminutes<10{0}\fi\theminutes }
} \makeatletter\def\thefootnote{\@arabic\c@footnote}\makeatother

\begin{abstract}
	In this paper, we study extremal problems for coefficient functionals associated with a distinguished subclass of holomorphic semigroup generators, denoted by $\mathcal{A}_{\beta}$ ($0 \le \beta \le 1$), defined on the unit disk $\mathbb{D}$. This class forms a natural filtration of the class $\mathcal{G}_0$ of infinitesimal generators, with the class $\mathcal{R}$ of functions of bounded turning arising as its minimal element.	We obtain sharp bounds for the initial logarithmic coefficients $\gamma_n$, the inverse coefficients $A_n$, and the logarithmic inverse coefficients $\Gamma_n$ for $n = 1,2,3$ within the class $\mathcal{A}_{\beta}$. In addition, we address the successive coefficient problem by deriving sharp upper and lower estimates for the differences $|A_{n+1}| - |A_n|$ for $n = 1,2$. Furthermore, we establish sharp bounds for a generalized Fekete--Szeg\"o functional in the class $\mathcal{R}$. The extremality of the obtained results is demonstrated by explicit constructions, including functions related to Gauss hypergeometric functions. Our results unify and extend several earlier contributions in geometric function theory and reveal a structural connection between coefficient problems for functions of bounded turning and the dynamics of holomorphic semigroup generators.
\end{abstract}

\maketitle
\pagestyle{myheadings}
\markboth{S. Mandal and M. B. Ahamed}{Coefficient and Inverse Estimates for Semigroup Generators}

\section{\bf Introduction}
Let $\mathcal{H}$ denote the class of functions that are holomorphic in the open unit disk $\mathbb{D} = \{ z \in \mathbb{C} : |z| < 1 \}$. Equipped with the topology of uniform convergence on compact subsets of $\mathbb{D}$, the space $\mathcal{H}$ becomes a locally convex topological vector space.

We consider the standard subclass
\[
\mathcal{A} = \{ f \in \mathcal{H} : f(0) = 0 \ \text{and} \ f'(0) = 1 \}.
\]
Each function $f \in \mathcal{A}$ admits the Taylor expansion
\begin{equation}\label{eq-1.1}
	f(z) = z + \sum_{n=2}^{\infty} a_n z^n, \quad z \in \mathbb{D}.
\end{equation}
Let $\mathcal{S} \subset \mathcal{A}$ denote the class of univalent functions in $\mathbb{D}$. For background on the theory of univalent functions, we refer to the classical monographs \cite{Duren-1983-NY, Goodman-1983}.\vspace{2mm}

Let $\mathcal{B}$ be the class of holomorphic self mappings from $\mathbb{D}$ to $\mathbb{D}$. A family $\{u_t(z)\}_{t\geq 0}\subset\mathcal{B}$ is called a one-parameter continuous semigroup if 
\begin{enumerate}
	\item[(i)] $\lim\limits_{t\to 0} u_t(z)=z$.
	\item[(ii)] $u_{t+s}(z)=u_t(z)u_s(z)$,
	\item[(iii)]  $\lim\limits_{t\to s}u_t(z)=u_s(z)$
\end{enumerate}
for each $z\in\mathbb{D}$ hold.\vspace{1.2mm}

In their seminal work, Berkson and Porta \cite{Berkson-Porta-MMJ-1978} showed that every one-parameter semigroup $\{u_t\}_{t \ge 0}$ of holomorphic self-maps of $\mathbb{D}$ is locally differentiable with respect to the parameter $t \ge 0$. Moreover, if
\[
\lim_{t \to 0} \frac{z - u_t(z)}{t} = f(z),
\]
where $f$ is holomorphic in $\mathbb{D}$, then $\{u_t\}$ is the unique solution to the Cauchy problem
\[
\frac{\partial u_t(z)}{\partial t} + f(u_t(z)) = 0, \qquad u_0(z) = z.
\]
The function $f$ is called the (holomorphic) generator of the semigroup $\{u_t\}_{t \ge 0} \subset \mathcal{B}$, and the class of all such generators is denoted by $\mathcal{G}$.

It is worth noting that, although each mapping $u_t$ is univalent in $\mathbb{D}$, the corresponding generator $f$ need not be univalent (see \cite{Elin-Shoiket-2010}). The structural and dynamical properties of such generators and their associated semigroups have been extensively studied; see, for instance, \cite{Berkson-Porta-MMJ-1978, Bracci-Contreras-Diaz-2020, Elin-Shoiket-2010, Elin-Shoikhet-Sugawa-2018, Elin-Reich-Shoikhet-2019, Shoikhet-2001} and the references therein.

A fundamental characterization of these generators, due to Berkson and Porta \cite{Berkson-Porta-MMJ-1978}, is recalled below.
\begin{thm}\cite{Berkson-Porta-MMJ-1978}
	The following assertions are equivalent:
	\begin{enumerate}
		\item[(a)] $f\in\mathcal{G}$,
		\item[(b)] $f(z)=(z-\sigma)(1-z\bar{\sigma})p(z)$ with some $\sigma\in\overline{\mathbb{D}}$ and $p\in\mathcal{H}$,\; ${\rm Re}\; p(z)\geq 0$.  
	\end{enumerate}
\end{thm}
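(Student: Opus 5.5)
The plan is to identify the generators with the negatives of vector fields whose flow preserves $\mathbb{D}$, so that (a) becomes a flow-invariance condition. That condition is then converted into a boundary inequality for $f$, and the inequality is factored to produce the Berkson--Porta form. (The semigroup property (ii) is understood as composition, $u_{t+s}=u_t\circ u_s$.)

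\emph{Step 1: (b) $\Rightarrow$ (a).} Given $f(z)=(z-\sigma)(1-z\bar\sigma)p(z)$ with ${\rm Re}\,p\ge 0$, I first compute, for $|z|=1$,
\[
{\rm Re}\bigl(\bar z f(z)\bigr)=|z-\sigma|^2\,{\rm Re}\,p(z)\ge 0 .
\]
This uses $\bar z(z-\sigma)(1-z\bar\sigma)=(1-\bar z\sigma)(1-z\bar\sigma)=|1-z\bar\sigma|^2=|z-\sigma|^2$ on the circle. To avoid boundary regularity issues, I replace $f$ by $f_r(z)=f(rz)/r$ (or perturb $p$ to $p+\varepsilon$ and $\sigma$ inward) and pass to a limit at the end.

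The key inequality is then
\[
\frac{d}{dt}|u_t|^2=-2\,{\rm Re}\bigl(\overline{u_t}\,f(u_t)\bigr).
\]
Suppose the local solution of $\partial_t u+f(u)=0$, $u_0=z$, stays in $\mathbb{D}$ for all $t\ge 0$. Then uniqueness of ODE solutions gives the semigroup property, and holomorphic dependence on initial data gives $u_t\in\mathcal{B}$.

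\emph{Main obstacle.} The boundary inequality alone does not prevent the trajectory from escaping. I plan to handle this by a disk-rescaling comparison. Set $\phi(z)={\rm Re}(\bar z f(z))$ in the interior; it is not sign-definite there. Instead I use the Schwarz-type estimate coming from the representation: writing $\sigma$-Möbius coordinates $m(z)=(z-\sigma)/(1-\bar\sigma z)$, one checks that ${\rm Re}\bigl(\overline{m}\,\dot m\bigr)\le 0$ along the flow. Indeed
\[
\dot m=-\frac{(1-|\sigma|^2)f}{(1-\bar\sigma u)^2},
\qquad\text{so}\qquad
\overline{m}\,\dot m=-(1-|\sigma|^2)\,|m|^2\,\frac{1-\bar\sigma u}{\,\overline{1-\bar\sigma u}\,}\,p\cdot\frac{\overline{1-\bar\sigma u}}{1-\bar\sigma u}\cdots .
\]
I expect this computation to need care; if it fails for interior $\sigma$, I will fall back on the standard route. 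That route writes $f=(z-\sigma)(1-z\bar\sigma)p$, notes that $|m(u_t)|$ is nonincreasing when $\sigma\in\mathbb{D}$ (a direct computation gives $\tfrac{d}{dt}\log|m|=-{\rm Re}\,p(u)\,|1-\bar\sigma u|^2/(\ldots)\le 0$ after simplification), and handles $|\sigma|=1$ as a limit.

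\emph{Step 2: (a) $\Rightarrow$ (b).} For a semigroup, the hypothesized limit $f$ gives
\[
{\rm Re}\bigl(\bar z f(z)\bigr)=\lim_{t\to0}\frac{|z|^2-{\rm Re}(\bar z u_t(z))}{t}.
\]
Using the Schwarz--Pick lemma for $u_t$, I derive the inequality
\[
{\rm Re}\bigl(f(z)\bar z\bigr)\ge {\rm Re}\bigl(f(0)\bar z\bigr)-\ldots
\]
in the Julia form. More concretely, I will compare $u_t(z)$ with $\mathbb{D}$ to get ${\rm Re}\bigl(2f(z)\bar z\bigr)\ge (1-|z|^2)\,{\rm Re}\bigl(\ldots\bigr)$. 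I expect this to lead to the criterion
\[
{\rm Re}\bigl(f(z)\bar z\bigr)\ge {\rm Re}\bigl(f(0)\bar z\bigr)(1-|z|^2).
\]
Next I define $p$ via the case split. If $u_t$ has an interior fixed point $\sigma$ (the Denjoy--Wolff point of the semigroup, which is common to all $u_t$), then $f(\sigma)=0$, and I set $p=f/\bigl((z-\sigma)(1-\bar\sigma z)\bigr)$. Its real part is $\ge0$ on circles $|z|=r\to1$ by the boundary inequality, so ${\rm Re}\,p\ge 0$ by the minimum principle for harmonic functions. If the Denjoy--Wolff point $\sigma$ lies on $\partial\mathbb{D}$, I approximate by $u_t$ composed with slight contractions, obtain interior points $\sigma_n\to\sigma$ and functions $p_n$ with ${\rm Re}\,p_n\ge0$, and then pass to the limit. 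This works by normality of functions with nonnegative real part, normalized via $p_n(0)$.
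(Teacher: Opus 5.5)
The paper does not prove this theorem; it only quotes it from Berkson--Porta, so your plan has to stand on its own. The interior-fixed-point parts are essentially sound. Both directions, however, have a genuine gap when the Denjoy--Wolff point lies on $\partial\mathbb{D}$.

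\textbf{(a)$\Rightarrow$(b), $|\sigma|=1$.} Composing with contractions does not work. Each $ru_t$ has an interior fixed point, but $\{ru_t\}$ is not a semigroup: already $ru_t\to rz\neq z$ as $t\to 0$, and $(ru_t)\circ(ru_s)\neq ru_{t+s}$. So there is no generator to which your interior argument applies, and no $p_n$ with $\Re p_n\ge 0$ is ever produced. Perturbing the generator instead, e.g.\ $f+\varepsilon z$, would need the converse of the criterion you only ``expect''.

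The missing idea is Wolff's lemma. Every $u_t$ maps each horodisk at the common Denjoy--Wolff point $\sigma$ into itself, so $t\mapsto|\sigma-u_t(z)|^2/(1-|u_t(z)|^2)$ is maximal at $t=0$. Differentiate at $t=0^+$, using $\partial_t u_t|_{t=0}=-f$ and $1-\sigma\bar z=\sigma\overline{(\sigma-z)}$. This gives $\Re\bigl(\sigma f(z)\overline{(\sigma-z)}^{\,2}\bigr)\le 0$. Equivalently, $p:=-\sigma f/(z-\sigma)^2$ satisfies $\Re p\ge 0$, and $(z-\sigma)(1-\bar\sigma z)p=f$. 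No approximation is needed.

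In the interior case, your minimum-principle step is also unjustified for $\sigma\neq 0$. Converting $\Re(\bar z f(z))\ge -|f(0)|(1-r^2)$ into a bound for $\Re p$ on $|z|=r$ costs an error of order $(1-r)|f(z)|$. That error need not vanish, since generators can grow like $(1-|z|)^{-1}$. Conjugate instead by $m(z)=(z-\sigma)/(1-\bar\sigma z)$. The semigroup $m\circ u_t\circ m^{-1}$ fixes $0$, and Schwarz's lemma gives $\Re(\bar w\tilde f(w))\ge 0$ for its generator $\tilde f=(m'f)\circ m^{-1}$. Pulling back gives $f=(z-\sigma)(1-\bar\sigma z)\,\tilde p(m(z))/(1-|\sigma|^2)$ with $\tilde p(w)=\tilde f(w)/w$. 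This also makes the ``Julia-form'' criterion unnecessary.

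\textbf{(b)$\Rightarrow$(a), $|\sigma|=1$.} For interior $\sigma$ the correct identity is $\frac{d}{dt}m(u_t)=-(1-|\sigma|^2)p(u_t)\,m(u_t)$. So $|m(u_t)|$ decreases, the orbit stays in a compact hyperbolic disk, and that part is fine.

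``Handle $|\sigma|=1$ as a limit'' hides the real difficulty. As $\sigma_n\to\sigma\in\partial\mathbb{D}$, these invariant disks degenerate to a horodisk touching $\partial\mathbb{D}$ at $\sigma$. Nothing you state prevents the limiting orbit from reaching $\sigma$ in finite time.

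One way to close this: in the coordinate $w=(\sigma+z)/(\sigma-z)$ the equation becomes $\dot w=2q(w)$ on the right half-plane. Here $q$ is $p$ written in the variable $w$, so $\Re q\ge 0$. For nonconstant $q$, Schwarz--Pick gives $|q'(w)|\le \Re q(w)/\Re w$. Hence $\frac{d}{dt}\log\bigl(|q(w)|/\Re w\bigr)=2\Re q'(w)-2\Re q(w)/\Re w\le 0$, so the hyperbolic speed is nonincreasing. On every bounded time interval the orbit therefore stays in a compact hyperbolic ball, and the solution exists globally in $\mathbb{D}$.
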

The point $\sigma \in \overline{\mathbb{D}} := \{ z \in \mathbb{C} : |z| \le 1 \}$ is called the Denjoy--Wolff point of the semigroup generated by $f$. By the Denjoy--Wolff theorem for continuous semigroups (see \cite{Shoikhet-2001}), if the semigroup $\{u_t\}_{t \ge 0}$ generated by $f$ contains at least one element that is neither an elliptic automorphism of $\mathbb{D}$ nor the identity mapping, then there exists a unique point $\sigma \in \overline{\mathbb{D}}$ such that
\[
\lim_{t \to \infty} u_t(z) = \sigma
\]
uniformly on compact subsets of $\mathbb{D}$, for each $z \in \mathbb{D}$.

We denote by $\mathcal{G}[\sigma]$ the class of holomorphic generators with Denjoy--Wolff point $\sigma$. In the particular case $\sigma = 0$, this reduces to the subclass
\[
\mathcal{G}[0] = \{ f \in \mathcal{G} : f(z) = z p(z), \ \Re p(z) \ge 0 \}.
\]
Following \cite{Bracci et. al.-FACM-2018}, we define $\mathcal{G}_0 := \mathcal{G}[0] \cap \mathcal{A}$. This class plays a central role in the study of non-autonomous problems, particularly in Loewner theory; see, for instance, \cite{Bracci-Contreras-Diaz-JRAM-2012, Duren-1983-NY}.

Recently, several parametrized subclasses of $\mathcal{G}_0$, forming a filtration with the class
\[
\mathcal{R} = \{ f \in \mathcal{A} : \Re f'(z) > 0 \}
\]
of functions of bounded turning as its minimal element, have been investigated (see \cite{Bracci et. al.-FACM-2018, Elin-Shoikhet-Sugawa-2018, Shoikhet-MJM-2016}).\vspace{1.2mm}

For $\beta \in [0,1]$, we consider the class
\begin{equation}\label{eq:A_beta}
	\mathcal{A}_{\beta} := \left\{ f \in \mathcal{A} : \Re\!\left( \beta \frac{f(z)}{z} + (1-\beta) f'(z) \right) > 0 \right\},
\end{equation}
which forms a subclass of $\mathcal{G}_0$. It was shown in \cite{Bracci et. al.-FACM-2018} that this family provides a filtration of $\mathcal{G}_0$, in the sense that
\[
\mathcal{A}_{\beta_1} \subsetneq \mathcal{A}_{\beta_2} \subsetneq \mathcal{G}_0, \qquad 0 \le \beta_1 < \beta_2 < 1,
\]
and, moreover, each function $f \in \mathcal{A}_{\beta}$ satisfies the estimate
\[
\Re\!\left( \frac{f(z)}{z} \right) \ge \int_{0}^{1} \frac{1 - t^{\,1-\beta}}{1 + t^{\,1-\beta}} \, dt.
\]

In the limiting case $\beta = 0$, the class $\mathcal{A}_{\beta}$ reduces to the classical class $\mathcal{R}$ of functions with bounded turning. More recently, Elin \emph{et al.} \cite{Elin-Shoikhet-Tuneski-2020} investigated radius problems associated with $\mathcal{A}_{\beta}$, determining the largest $r \in (0,1)$ for which the rescaled function $f(rz)/r$ belongs to the class $\mathcal{S}^*$ of starlike functions, as well as to several related subclasses.

This direction of study is further motivated by the fact that the classes $\mathcal{S}^*$ and $\mathcal{A}_{\beta}$ are not comparable under inclusion. Building on these developments, Giri and Kumar \cite{Giri-Kumar-RMJ-2025} recently examined coefficient problems and growth estimates for classes of semigroup generators, thereby strengthening the interplay between complex dynamical systems and geometric function theory.\vspace{2mm}

\subsection{The Gauss hypergeometric function}

The Gauss hypergeometric function, denoted by ${}_2F_1(a,b;c;z)$, occupies a central position in classical analysis and geometric function theory. It serves as a unifying object that encompasses a wide range of elementary and special functions, including logarithmic and trigonometric functions, as well as orthogonal polynomials such as Jacobi polynomials. Moreover, hypergeometric functions frequently arise in extremal problems, coefficient estimates, and in the construction of sharp examples in geometric function theory.

For complex parameters $a$, $b$, and $c$ with $c \notin \{0,-1,-2,\dots\}$, the function ${}_2F_1(a,b;c;z)$ is defined in the unit disk $|z|<1$ by the power series
\[
{}_2F_1(a,b;c;z) = \sum_{n=0}^{\infty} \frac{(a)_n (b)_n}{(c)_n} \frac{z^n}{n!},
\]
where $(q)_n$ denotes the Pochhammer symbol (or rising factorial), given by
\[
(q)_n = q(q+1)\cdots(q+n-1), \quad n \ge 1, \qquad (q)_0 = 1.
\]
Here $a$, $b$, and $c$ are complex parameters, and $z$ is a complex variable. If either $a$ or $b$ is a non-positive integer, say $-k$ with $k \in \mathbb{N}\cup\{0\}$, then the series terminates after finitely many terms, and ${}_2F_1(a,b;c;z)$ reduces to a polynomial.

For a wide range of subclasses of univalent functions, coefficient estimates under various normalizations, together with their sharpness, are by now well understood. In contrast, for holomorphic semigroup generators the corresponding theory remains largely undeveloped, apart from a few initial contributions (see, e.g.,~\cite{Giri-Kumar-RMJ-2025}). Despite substantial progress on coefficient problems, growth estimates, and radius questions for classical subclasses, a systematic treatment of sharp bounds for logarithmic coefficients, inverse coefficients, logarithmic inverse coefficients, and successive coefficient differences in the setting of semigroup generators is still lacking.

The present paper aims to fill this gap. We establish sharp estimates for these functionals in the class $\mathcal{A}_{\beta}$, thereby providing a unified approach to several coefficient problems in this framework. In particular, our results yield extremal bounds that are shown to be best possible.

This investigation is motivated by the growing interest in the analytic and geometric structure of semigroup generators. The interplay between geometric function theory and complex dynamical systems has recently led to new insights into the behavior of such generators, and the present work contributes to this direction by offering a refined understanding of their coefficient structure.
\subsection{Organization of the paper.}
The key contributions of this paper are presented in a section-wise manner as follows.

In Section~2, we develop the first set of main results by establishing sharp coefficient estimates for functions in the class $\mathcal{A}_{\beta}$, including bounds for the initial logarithmic coefficients $\gamma_n$, the inverse coefficients $A_n$, and the logarithmic inverse coefficients $\Gamma_n$ for $n=1,2,3$.

Section~3 is devoted to the successive coefficient problem for inverse functions, where we obtain sharp upper and lower bounds for the differences $|A_{n+1}| - |A_n|$ for $n=1,2$ in the class $\mathcal{A}_{\beta}$.

In Section~4, we present further contributions by deriving sharp upper and lower estimates for the generalized Fekete--Szeg\H{o} functional 
\begin{equation*}
	F_{\lambda,\mu}(f) := |a_3 - \lambda a_2^2| - \mu |a_2|,
\end{equation*}
where $\lambda \in \mathbb{C}$ and $\mu \ge 0$, in the class $\mathcal{R}$.

All proofs, together with the corresponding extremal functions establishing the sharpness of the results, are provided in the respective sections.
	
\section{\bf Sharp coefficient estimates for the class $\mathcal{A}_{\beta}$}

In this section, we establish sharp logarithmic and inverse logarithmic coefficient bounds for functions belonging to the class $\mathcal{A}_{\beta}$. 

Let $\mathcal{P}$ denote the class of analytic functions $p$ in the unit disk $\mathbb{D}$ satisfying $p(0)=1$ and $\Re p(z)>0$ for all $z \in \mathbb{D}$. Then each function $p \in \mathcal{P}$ admits the representation
\begin{equation}\label{eq-2.1}
	p(z)=1+\sum_{n=1}^{\infty} c_n z^n, \qquad z \in \mathbb{D}.
\end{equation}
Functions in the class $\mathcal{P}$ are referred to as Carath\'eodory functions. It is well-known that their coefficients satisfy the sharp estimate $|c_n| \le 2$ for all $n \ge 1$ (see \cite{Duren-1983-NY}). This class plays a fundamental role in the derivation of coefficient estimates in geometric function theory.

We begin by recalling several auxiliary lemmas that will be used in the proofs of the main results.
\begin{lem}\cite{Sim-Thomas-S-2020}\label{lem-2.1}
Let $B_1$, $B_2$ and $B_3$ be numbers such that $B_1>0$, $B_2\in\mathbb{C}$ and $B_3\in\mathbb{R}$. Let $p\in\mathcal{P}$ of the form \eqref{eq-2.1}. Define $\Psi_{+}(c_1,c_2)$ and $\Psi_{-}(c_1,c_2)$ by
\begin{align*}
	\Psi_{+}(c_1,c_2)=|B_2 c^2_1 +B_3 c_2| -|B_1 c_1|,
\end{align*}
and
\begin{align*}
	\Psi_{-}(c_1,c_2)=-\Psi_{+}(c_1,c_2).
\end{align*}
Then
\begin{align}\label{eq-2.2}
	\Psi_{+}(c_1,c_2)\leq
	\begin{cases}
		|4B_2 +2B_3|-2B_1, \;\;\;\;\mbox{if}\;\;|2B_2 +B_3|\geq |B_3|+ B_1,\vspace{2mm} \\ 2|B_3|,\hspace{2.8cm}\;\mbox{otherwise},
	\end{cases}
\end{align}
and
\begin{align}\label{eq-2.3}
	\Psi_{-}(c_1,c_2)\leq
	\begin{cases}
		2B_1 -B_4, \hspace{2.8cm}\mbox{if}\;\; B_1\geq B_4 +2|B_3|, \vspace{2mm} \\ 2B_1 \sqrt{\dfrac{2|B_3|}{B_4+2|B_3|}}, \hspace{1.3cm}\;\mbox{if}\;\;B^2_1\leq 2|B_3|(B_4 +2|B_3|), \vspace{2mm} \\ 2|B_3| +\dfrac{B^2_1}{B_4+2|B_3|}, \hspace{1cm}\;\mbox{otherwise},
	\end{cases}
\end{align}
where $B_4=|4B_2 +2B_3|$. All inequalities in \eqref{eq-2.2} and \eqref{eq-2.3} are sharp.
\end{lem}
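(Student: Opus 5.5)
The plan is to reduce the problem to a real extremal problem on $[0,1]$. The tool is the classical parametrization of the first two Carath\'eodory coefficients (Carath\'eodory--Toeplitz, in the Libera--Zlotkiewicz form). First, note that $\Psi_{\pm}$ is rotation invariant. Replacing $p(z)$ by $p(e^{i\theta}z)$ sends $c_1\mapsto e^{i\theta}c_1$ and $c_2\mapsto e^{2i\theta}c_2$, and this leaves $|B_2c_1^2+B_3c_2|$ and $|B_1c_1|$ unchanged. So we may assume $c_1=2x$ with $x\in[0,1]$. The lemma for $\mathcal{P}$ then gives
\[
c_2=2x^2+2(1-x^2)w,\qquad |w|\le 1,
\]
and every such pair $(x,w)$ is realized by some $p\in\mathcal{P}$. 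Substituting,
\[
B_2c_1^2+B_3c_2=(4B_2+2B_3)x^2+2B_3(1-x^2)w .
\]
For fixed $x$, this expression ranges over the closed disc with center $(4B_2+2B_3)x^2$ and radius $2|B_3|(1-x^2)$.

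\emph{Bound for $\Psi_+$.} Maximizing the modulus over $w$ gives $\Psi_+\le \phi(x):=B_4x^2+2|B_3|(1-x^2)-2B_1x$, where $B_4=|4B_2+2B_3|$.
\begin{itemize}
\item If $B_4\ge 2|B_3|$, then $\phi$ is convex, so its maximum on $[0,1]$ is $\max\{\phi(0),\phi(1)\}=\max\{2|B_3|,\,B_4-2B_1\}$.
\item If $B_4<2|B_3|$, then $\phi$ is concave with $\phi'(0)=-2B_1<0$, so $\phi$ is decreasing and its maximum is $\phi(0)=2|B_3|$.
\end{itemize}
In both cases the bound is $\max\{2|B_3|,B_4-2B_1\}$. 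Since $B_4-2B_1\ge 2|B_3|$ is equivalent to $|2B_2+B_3|\ge |B_3|+B_1$, this is exactly \eqref{eq-2.2}.

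\emph{Bound for $\Psi_-$.} Here we minimize the modulus over the disc, which gives $\max\{0,(B_4+2|B_3|)x^2-2|B_3|\}$. Hence $\Psi_-\le g(x):=2B_1x-\max\{0,(B_4+2|B_3|)x^2-2|B_3|\}$. Put
\[
x_0=\sqrt{\frac{2|B_3|}{B_4+2|B_3|}},\qquad x^*=\frac{B_1}{B_4+2|B_3|}.
\]
On $[0,x_0]$ the function $g$ is increasing. On $[x_0,1]$ it equals the concave quadratic $2B_1x-(B_4+2|B_3|)x^2+2|B_3|$, whose vertex is at $x^*$. This gives three cases:
\begin{itemize}
\item $x^*\ge 1$, i.e.\ $B_1\ge B_4+2|B_3|$: the maximum is at $x=1$, giving $2B_1-B_4$.
\item $x^*\le x_0$, i.e.\ $B_1^2\le 2|B_3|(B_4+2|B_3|)$: the maximum is at $x_0$, giving $2B_1x_0$.
\item Otherwise: the maximum is at the vertex, giving $2|B_3|+B_1^2/(B_4+2|B_3|)$.
\end{itemize}
These are precisely the three branches of \eqref{eq-2.3}. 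The degenerate case $B_4+2|B_3|=0$ means $B_3=0$ and $B_4=0$, so only the first branch can apply, and it is checked directly.

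\emph{Sharpness.} For each branch, take the optimal $x$ and choose $w$ with $|w|\le1$ attaining the extremal modulus. Then realize $(x,w)$ by an explicit $p=(1+\omega)/(1-\omega)$ with Schur-type function
\[
\omega(z)=z\,\frac{x+wz}{1+xwz}
\]
(suitably conjugated so that the coefficients match). The endpoint cases are simpler:
\begin{itemize}
\item $x=1$: use $p(z)=(1+z)/(1-z)$, rotated so that $c_1^2$ has the phase that makes $B_2c_1^2+B_3c_2$ have modulus $B_4$.
\item $x=0$: use $p(z)=(1+z^2)/(1-z^2)$, rotated.
\end{itemize}
I expect the main care to be needed here and in the bookkeeping of the case boundaries for $\Psi_-$, namely checking that the conditions on $x^*$ and $x_0$ translate exactly into the stated inequalities among $B_1,B_3,B_4$. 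The optimization itself is elementary once the parametrization is in place.
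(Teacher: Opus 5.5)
Your proof is correct and complete. The paper does not prove Lemma~\ref{lem-2.1}; it quotes it from Sim and Thomas \cite{Sim-Thomas-S-2020}, so there is no in-paper argument to compare with. Your route is the standard one for lemmas of this type:
\begin{itemize}
\item rotate so that $c_1=2x\ge 0$;
\item write $c_2=2x^2+2(1-x^2)w$ with $|w|\le 1$;
\item take the maximum or minimum modulus over the resulting disc with centre $(4B_2+2B_3)x^2$ and radius $2|B_3|(1-x^2)$;
\item finish with one-variable calculus on $[0,1]$.
\end{itemize}
Your case analysis (convex versus concave $\phi$; then $x^*$ against $x_0$ and $1$ for $g$) reproduces all five branches of \eqref{eq-2.2} and \eqref{eq-2.3} exactly, including the degenerate case $B_4+2|B_3|=0$.

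Two small remarks on the sharpness part.

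First, your hedges there are unnecessary. Since $x$ is real, $\omega(z)=z(x+wz)/(1+xwz)$ is already a Schur function. Indeed, for $u=wz$ one has $|1+xu|^2-|x+u|^2=(1-x^2)(1-|u|^2)\ge 0$, and for $x=1$ the formula collapses to $\omega(z)=z$. Its first two Taylor coefficients are $x$ and $(1-x^2)w$, so $p=(1+\omega)/(1-\omega)$ has exactly $c_1=2x$ and $c_2=2x^2+2(1-x^2)w$, with no conjugation needed. At $x=1$ no rotation is needed either. A rotation multiplies $B_2c_1^2+B_3c_2$ by a unimodular factor, so it cannot change the modulus, and $p(z)=(1+z)/(1-z)$ gives modulus $|4B_2+2B_3|=B_4$ directly.

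Second, your argument never uses $B_3\in\mathbb{R}$. Only $|B_3|$ enters, through the radius $2|B_3|(1-x^2)$, so the same proof gives the lemma for complex $B_3$ as well.
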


\begin{lem}\cite{Ma-Minda-1994}\label{lem-2.2}
Let $p\in\mathcal{P}$ be given by \eqref{eq-2.1}. Then
\begin{align*}
	|c_2 -vc^2_1|\leq\begin{cases}
		-4v +2 \;\;\;\; v<0,\\ 2 \;\;\;\;\;\;\;\;\;\;\;\;\;\; 0\leq v\leq 1, \\ 4v -2 \;\;\;\;\;\;\; v>1.
	\end{cases}
\end{align*}
All inequalities are sharp.
\end{lem}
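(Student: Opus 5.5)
The proof rests on the classical parametrization of the first two coefficients of a Carath\'eodory function. The quantity $|c_2-vc_1^2|$ is invariant under rotation. Indeed, if $p\in\mathcal{P}$ then $p_\theta(z)=p(e^{i\theta}z)\in\mathcal{P}$ has coefficients $e^{i\theta}c_1$ and $e^{2i\theta}c_2$, so $c_2-vc_1^2$ is simply multiplied by $e^{2i\theta}$. Hence we may assume without loss of generality that $c_1=c\in[0,2]$.

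Next we invoke the well-known formula (Libera--Z\l otkiewicz, obtainable from the Schur algorithm or the Herglotz representation)
\[
2c_2=c_1^2+(4-c_1^2)\zeta,\qquad |\zeta|\le 1 .
\]
With $c_1=c$ this gives
\[
c_2-vc_1^2=\Bigl(\tfrac12-v\Bigr)c^2+\tfrac12(4-c^2)\zeta,
\]
and the triangle inequality yields
\[
|c_2-vc_1^2|\le \Bigl|\tfrac12-v\Bigr|c^2+\tfrac12(4-c^2)=2+\Bigl(\Bigl|\tfrac12-v\Bigr|-\tfrac12\Bigr)c^2 .
\]
The right-hand side is linear in $t=c^2\in[0,4]$, so its maximum is attained at an endpoint, and we split according to the sign of the slope.
\begin{itemize}
\item If $0\le v\le 1$, then $|\tfrac12-v|\le\tfrac12$, the slope is nonpositive, and the maximum is $2$, attained at $c=0$.
\item If $v<0$, the slope equals $-v>0$, and $t=4$ gives $2-4v$.
\item If $v>1$, the slope equals $v-1>0$, and $t=4$ gives $4v-2$.
\end{itemize}

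Sharpness follows from two explicit functions. For $v\notin[0,1]$, take $p(z)=(1+z)/(1-z)$, which has $c_1=c_2=2$ and gives $|2-4v|$; this equals $2-4v$ when $v<0$ and $4v-2$ when $v>1$. For $0\le v\le 1$, take $p(z)=(1+z^2)/(1-z^2)$, which has $c_1=0$ and $c_2=2$ and gives the value $2$.

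The only nontrivial ingredient is the representation formula for $c_2$. This is the step I would either cite or derive briefly, by writing $p=(1+\omega)/(1-\omega)$ with $\omega$ a Schwarz function and applying the coefficient inequality $|\omega_2|\le 1-|\omega_1|^2$. Everything else is an elementary endpoint analysis.
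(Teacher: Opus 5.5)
Your proof is correct. The rotation normalization to $c_1=c\in[0,2]$, the representation $2c_2=c_1^2+(4-c_1^2)\zeta$, the endpoint analysis of the linear function of $t=c^2\in[0,4]$, and both extremal functions all check out. One small point: the representation is valid as written only because $c_1\ge 0$ at that stage; for general $c_1$ the factor is $4-|c_1|^2$.

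The paper gives no proof of this lemma; it simply quotes the result from Ma--Minda. Your argument, which derives the representation from the Schwarz-function bound $|\omega_2|\le 1-|\omega_1|^2$, is a standard self-contained proof of it, so there is no paper proof to compare against.
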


\begin{lem}\cite{Ali-BMMSS-2001}\label{lem-2.3}
Let $p\in\mathcal{P}$ be given by \eqref{eq-2.1} with $0\leq B\leq 1$ and $B(2B-1)\leq D\leq B$. Then 
\begin{align*}
	|c_3 -2Bc_1 c_2 +Dc^3_1|\leq 2.
\end{align*}
\end{lem}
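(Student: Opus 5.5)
We would prove the inequality by expressing $c_1,c_2,c_3$ through the coefficients of the associated Schur function and reducing to an elementary extremal problem. Write $p=(1+w)/(1-w)$ with $w(z)=w_1z+w_2z^2+w_3z^3+\cdots$ holomorphic, $w(0)=0$ and $|w|<1$. Comparing coefficients gives
\[
c_1=2w_1,\qquad c_2=2(w_2+w_1^2),\qquad c_3=2(w_3+2w_1w_2+w_1^3).
\]
Substituting,
\[
c_3-2Bc_1c_2+Dc_1^3=2\bigl(w_3+(2-4B)w_1w_2+(1-4B+4D)w_1^3\bigr),
\]
so it suffices to show $|w_3+\mu w_1w_2+\nu w_1^3|\le 1$ with $\mu=2-4B$ and $\nu=1-4B+4D$. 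The hypotheses $0\le B\le 1$ and $B(2B-1)\le D\le B$ translate into $|\mu|\le 2$ and $\tfrac{\mu^2}{2}-1\le \nu\le 1$ (using $8B^2-8B+1=\tfrac12(2-4B)^2-1$). The task is therefore a Prokhorov--Szynal type estimate inside the domain where the sharp bound equals $1$.

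To estimate this functional, we would apply the Schur algorithm to $\omega(z)=w(z)/z$, or equivalently use the Libera--Z\l otkiewicz parametrization. This gives $w_1=\zeta_1$, $w_2=(1-|\zeta_1|^2)\zeta_2$ and $w_3=(1-|\zeta_1|^2)\bigl[(1-|\zeta_2|^2)\zeta_3-\bar\zeta_1\zeta_2^2\bigr]$ with $\zeta_j\in\overline{\mathbb{D}}$. Since $\zeta_3$ enters linearly, the triangle inequality in $\zeta_3$ removes it. Writing $x=|\zeta_1|$ and $y=|\zeta_2|$, we must then show
\[
\bigl|\nu\zeta_1^3+\mu(1-x^2)\zeta_1\zeta_2-(1-x^2)\bar\zeta_1\zeta_2^2\bigr|\le 1-(1-x^2)(1-y^2).
\]
After rotating so that $\zeta_1=x\ge 0$, the left side is $x\,|\nu x^2+(1-x^2)(\mu\zeta_2-\zeta_2^2)|$.

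The main obstacle is this two-variable estimate. The crude bound $|\nu|x^3+|\mu|(1-x^2)xy+(1-x^2)xy^2$ is too weak near $x=1$, so the signs of $\nu$ and of $\mu\zeta_2-\zeta_2^2$ must be exploited. We would first treat the quadratic $\zeta_2\mapsto\nu x^2+(1-x^2)(\mu\zeta_2-\zeta_2^2)$ on the circle $|\zeta_2|=y$. Its modulus is maximized either at $\zeta_2=\pm y$ or at the point where $\operatorname{Re}$ of a suitable rotation vanishes. Completing the square, $\mu\zeta_2-\zeta_2^2=\mu^2/4-(\zeta_2-\mu/2)^2$. This is exactly where the lower bound $\nu\ge \mu^2/2-1$ should ensure that the term $\nu x^2$ and the term $(1-x^2)\mu^2/4$ do not reinforce each other beyond the allowed amount.

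We would then reduce to checking a polynomial inequality in $(x,y)\in[0,1]^2$, which we expect to factor with a nonnegative factor $(1-x)$ or $(1-y)$. We would verify the two boundary values $\nu=1$ and $\nu=\mu^2/2-1$ first. The claim for intermediate $\nu$ then follows by convexity, since the left side is convex in $\nu$ for fixed $\zeta_1,\zeta_2$.
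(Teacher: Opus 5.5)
The paper gives no proof of this lemma; it only cites Ali. Your argument therefore has to stand on its own, and as written it is missing its central step.

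\textbf{What is correct.} Your reduction is right. You have $c_3-2Bc_1c_2+Dc_1^3=2\bigl(w_3+\mu w_1w_2+\nu w_1^3\bigr)$ with $\mu=2-4B$, $\nu=1-4B+4D$, and the hypotheses are exactly $\mu^2/2-1\le\nu\le1$. The Schur-parameter formulas, the normalization $\zeta_1=x\ge0$, the removal of $\zeta_3$ by the triangle inequality, and the convexity in $\nu$ are all fine.

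\textbf{The gap.} The inequality
\[
x\,\bigl|\nu x^2+(1-x^2)(\mu\eta-\eta^2)\bigr|\le x^2+(1-x^2)y^2,\qquad |\eta|=y,
\]
is the whole content of the lemma, and you only announce it (``we expect to factor''). It is not a routine check, for two reasons.
\begin{enumerate}
\item It is sharp: for $\mu=2$, $\nu=1$, $x=\tfrac12$, $\eta=-1$ both sides equal $1$.
\item For $\nu>0$, write $\eta=ye^{i\theta}$, $a=\nu x^2$, $b=\mu(1-x^2)y$, $c=(1-x^2)y^2$. Then $|a+be^{i\theta}-ce^{2i\theta}|^2$ is a concave quadratic in $\cos\theta$. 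Its maximum can be the interior value $(a+c)^2\bigl(1+\tfrac{b^2}{4ac}\bigr)$, valid only where $|b(a-c)|\le4ac$.
\end{enumerate}
So what remains is a piecewise, non-polynomial inequality on a constrained region. That is precisely the Prokhorov--Szynal case analysis, and your sketch does not carry it out.

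\textbf{First way to close it: cite Prokhorov--Szynal.} You already recognised the functional. Their lemma gives $|w_3+\mu w_1w_2+\nu w_1^3|\le1$ whenever either $|\mu|\le\tfrac12$ and $-1\le\nu\le1$, or $\tfrac12\le|\mu|\le2$ and $\tfrac4{27}(|\mu|+1)^3-(|\mu|+1)\le\nu\le1$. It remains to check containment.
\begin{enumerate}
\item For $|\mu|\le\tfrac12$: $\nu\ge\mu^2/2-1\ge-1$.
\item For $\tfrac12\le|\mu|\le2$: put $s=|\mu|+1\in[\tfrac32,3]$. Then
\[
\tfrac{\mu^2}{2}-1-\bigl(\tfrac{4}{27}s^3-s\bigr)=\tfrac{1}{54}(3-s)(8s^2-3s-9)\ge0 .
\]
\end{enumerate}

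\textbf{Second way: a self-contained convexity argument.} This avoids maximizing over $\arg\eta$ altogether.
\begin{enumerate}
\item For fixed $w$, the map $(\mu,\nu)\mapsto|w_3+\mu w_1w_2+\nu w_1^3|$ is convex. Hence the set of $(\mu,\nu)$ for which the bound $1$ holds is convex.
\item The region $\{\mu^2/2-1\le\nu\le1\}$ lies in the convex hull of three pieces: the points $(\pm2,1)$, the lens $|\nu|\le1-\mu^2/2$, and the points $(\pm\tfrac32,0)$.
\item The points $(\pm2,1)$ are admissible: they are the inequality $|c_3|\le2$ applied to $p$ and to $1/p$.
\item On the lens, your crude triangle-inequality bound suffices. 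The resulting quadratic in $y$ has nonpositive discriminant iff $4-\mu^2-x(\mu^2+4|\nu|)\ge0$. This is linear in $x$ and holds at $x=0$ and $x=1$.
\item At $(\tfrac32,0)$ the crude bound also suffices. The minimum over $y\in[0,1]$ is $\tfrac{x^2(7-9x)}{16}\ge0$ for $x\le\tfrac47$, and $1-\tfrac52x(1-x^2)>0$ for $x>\tfrac47$.
\end{enumerate}
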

\subsection{Logarithmic and inverse coefficients: definitions and significance}

For a function $f \in \mathcal{S}$, we consider the associated logarithmic function
\begin{equation}\label{eq-1.2}
	F_{f}(z) := \log\frac{f(z)}{z} = 2\sum_{n=1}^{\infty} \gamma_n(f)\, z^n, \quad z \in \mathbb{D}, \qquad \log 1 := 0.
\end{equation}
The coefficients $\gamma_n := \gamma_n(f)$, $n \in \mathbb{N}$, are called the logarithmic coefficients of $f$. These coefficients encode subtle geometric information about the mapping behavior of $f$ and play a central role in the study of univalent functions, particularly in connection with growth, distortion, and covering properties.

Despite their importance, sharp estimates for logarithmic coefficients remain largely elusive. While the theory of coefficient functionals constitutes a central theme in geometric function theory, obtaining sharp bounds for higher-order logarithmic coefficients is a challenging open problem; in particular, determining optimal estimates for $|\gamma_n|$ when $n \ge 3$ is still unresolved. Various partial results and estimates for $\gamma_n$ and their subclasses have been obtained in recent years; see, for example, \cite{Ali-Allu-PAMS-2018, Cho-Kowalczyk-kwon-Lecko-Sim-RACSAM-2020, Thomas-PAMS-2016, Roth-PAMS-2007, Girela-AASF-2000}.

Differentiating \eqref{eq-1.2} and using the expansion \eqref{eq-1.1}, one readily obtains
\begin{equation}\label{eq-1.3}
	\begin{cases}
		\gamma_{1}=\dfrac{1}{2}a_{2}, \\[2mm]
		\gamma_{2}=\dfrac{1}{2}\left(a_{3}-\dfrac{1}{2}a_{2}^{2}\right), \\[2mm]
		\gamma_{3}=\dfrac{1}{2}\left(a_{4}-a_{2}a_{3}+\dfrac{1}{3}a_{2}^{3}\right).
	\end{cases}
\end{equation}

Let $F$ denote the inverse function of $f \in \mathcal{S}$, defined in a neighborhood of the origin and given by the expansion
\begin{equation}\label{eq-1.4}
	F(w) := f^{-1}(w) = w + \sum_{n=2}^{\infty} A_n w^n.
\end{equation}
By the Koebe quarter theorem, the inverse function $f^{-1}$ is well-defined at least in the disk $|w| < 1/4$. Using variational methods, L\"owner \cite{Lowner-IMA-1923} established the sharp estimate $|A_n| \le K_n$ for each $n$, where $K_n = \frac{(2n)!}{n!(n+1)!}$, and the extremal function corresponds to the inverse of the Koebe function.

The study of inverse coefficients has attracted considerable attention, especially for functions belonging to various geometric subclasses of $\mathcal{S}$, due to their deep connections with extremal problems and the structural theory of univalent functions.

Let $f(z)=z+\sum_{n=2}^{\infty} a_n z^n$ be a function in the class $\mathcal{S}$. Since $f(f^{-1}(w))=w$, it follows from \eqref{eq-1.4} that the coefficients of the inverse function $F=f^{-1}$ satisfy
\begin{equation}\label{eq-1.5}
	\begin{cases}
		A_2 = -a_2, \\[1mm]
		A_3 = -a_3 + 2a_2^2, \\[1mm]
		A_4 = -a_4 + 5a_2 a_3 - 5a_2^3.
	\end{cases}
\end{equation}

The logarithmic coefficients of the inverse function were systematically studied by Ponnusamy \emph{et al.} \cite{Ponnusamy-Sharma-Wirths-RM-2018}. Analogous to \eqref{eq-1.2}, the logarithmic inverse coefficients $\Gamma_n := \Gamma_n(F)$, $n \in \mathbb{N}$, are defined by
\begin{equation}\label{eq-1.6}
	F_{f^{-1}}(w) := \log\!\left(\frac{f^{-1}(w)}{w}\right)
	= 2 \sum_{n=1}^{\infty} \Gamma_n(F)\, w^n, 
	\qquad |w| < \tfrac{1}{4}.
\end{equation}
Sharp estimates for these coefficients in the class $\mathcal{S}$ were also obtained in \cite{Ponnusamy-Sharma-Wirths-RM-2018}.

By differentiating \eqref{eq-1.6} and using \eqref{eq-1.4} together with \eqref{eq-1.5}, a straightforward computation yields
\begin{equation}\label{eq-1.7}
	\begin{cases}
		\Gamma_1 = -\dfrac{1}{2}a_2, \\[2mm]
		\Gamma_2 = -\dfrac{1}{2}\left(a_3 - \dfrac{3}{2}a_2^2\right), \\[2mm]
		\Gamma_3 = -\dfrac{1}{2}\left(a_4 - 4a_2 a_3 + \dfrac{10}{3}a_2^3\right).
	\end{cases}
\end{equation}
Utilizing Lemmas \ref{lem-2.1} to \ref{lem-2.3}, we obtain the following result concerning the sharp coefficients bounds.

\begin{thm}\label{th-2.1}
Let $f\in\mathcal{A}_{\beta}$ be the form \eqref{eq-1.1} and $\gamma_{1}, \gamma_{2}, \gamma_{3}$ are given by \eqref{eq-1.3}. Then, we have
\begin{align*}
	|\gamma_{m}|\leq\frac{1}{(m+1)-m\beta},\hspace{0.5cm}\mbox{for}\;\; m=1,2,3.
\end{align*}
All bounds are sharp.	
\end{thm}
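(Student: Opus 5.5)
Write the defining condition of $\mathcal{A}_{\beta}$ as
\[
\beta\frac{f(z)}{z}+(1-\beta)f'(z)=p(z),\qquad p\in\mathcal{P},
\]
with $p$ as in \eqref{eq-2.1}. Comparing coefficients of $z^{n-1}$ gives $\bigl(\beta+(1-\beta)n\bigr)a_n=c_{n-1}$. Hence
\[
a_n=\frac{c_{n-1}}{n-(n-1)\beta},\qquad\text{so}\quad a_2=\frac{c_1}{2-\beta},\quad a_3=\frac{c_2}{3-2\beta},\quad a_4=\frac{c_3}{4-3\beta}.
\]
Substituting into \eqref{eq-1.3} turns each $\gamma_m$ into a polynomial in $c_1,c_2,c_3$. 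The case $m=1$ is then immediate: $\gamma_1=c_1/(2(2-\beta))$, and $|c_1|\le 2$ gives $|\gamma_1|\le 1/(2-\beta)$.

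For $m=2$ we get $\gamma_2=\frac{1}{2(3-2\beta)}\bigl(c_2-v c_1^2\bigr)$ with
\[
v=\frac{3-2\beta}{2(2-\beta)^2}.
\]
We will check that $0\le v\le 1$ for $\beta\in[0,1]$. This amounts to $3-2\beta\le 2(2-\beta)^2$, which holds since the right side is at least $2$ while the left side is at most $3$ and decreasing; more precisely, $2(2-\beta)^2-(3-2\beta)=2\beta^2-6\beta+5>0$. Lemma \ref{lem-2.2} then gives $|c_2-vc_1^2|\le 2$, hence $|\gamma_2|\le 1/(3-2\beta)$.

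For $m=3$ we write
\[
\gamma_3=\frac{1}{2(4-3\beta)}\bigl(c_3-2Bc_1c_2+Dc_1^3\bigr),\qquad
B=\frac{4-3\beta}{2(2-\beta)(3-2\beta)},\quad D=\frac{4-3\beta}{3(2-\beta)^3},
\]
and apply Lemma \ref{lem-2.3}. This requires verifying $0\le B\le 1$ and $B(2B-1)\le D\le B$ on $[0,1]$; I expect this to be the main (though elementary) obstacle.

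\begin{itemize}
\item $B\ge0$ is obvious. $B\le 1$ amounts to $4-3\beta\le 2(2-\beta)(3-2\beta)$, which is clear since the right side is at least $2$ and exceeds the left side by a positive quadratic.
\item $D\le B$ reduces to $2(3-2\beta)\le 3(2-\beta)^2$, i.e.\ $3\beta^2-8\beta+6>0$, which holds since the discriminant is negative.
\item $B(2B-1)\le D$: since $2B-1=\frac{4-3\beta-(2-\beta)(3-2\beta)}{(2-\beta)(3-2\beta)}=\frac{-2\beta^2+4\beta-2}{(2-\beta)(3-2\beta)}=-\frac{2(1-\beta)^2}{(2-\beta)(3-2\beta)}\le 0$, we have $B(2B-1)\le 0\le D$.
\end{itemize}

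The endpoint checks confirm these: at $\beta=0$, $B=1/3$ and $D=1/6$; at $\beta=1$, $B=1/2$ and $D=1/3$. Lemma \ref{lem-2.3} then yields $|\gamma_3|\le 1/(4-3\beta)$.

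For sharpness, fix $m\in\{1,2,3\}$ and take $p(z)=\frac{1+z^m}{1-z^m}$. Then $c_{km}=2$ for all $k\ge1$ and every other coefficient vanishes. The corresponding function is
\[
f(z)=z+2\sum_{k\ge1}\frac{z^{km+1}}{km+1-km\beta},
\]
which solves the linear ODE $\beta f/z+(1-\beta)f'=p$ and lies in $\mathcal{A}_\beta$; it can be expressed through a Gauss hypergeometric function. For this $f$ we have $a_2=\dots=a_m=0$ and $a_{m+1}=2/(m+1-m\beta)$, so \eqref{eq-1.3} gives $\gamma_m=a_{m+1}/2=1/(m+1-m\beta)$. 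Thus equality holds in each of the three bounds.
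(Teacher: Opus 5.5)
Your proof is correct and follows the paper's route: the coefficient relation $a_n=c_{n-1}/(n-(n-1)\beta)$, Lemma~\ref{lem-2.2} for $\gamma_2$, Lemma~\ref{lem-2.3} for $\gamma_3$, and extremal functions generated by $p(z)=(1+z^m)/(1-z^m)$. Your normalization $B=\frac{4-3\beta}{2(2-\beta)(3-2\beta)}$ is the correct one; the paper's displayed $B$ omits the factor $\frac12$, although its cubic condition $15-27\beta+16\beta^2-3\beta^3\ge 0$ comes from your value. Your observation that $2B-1=-\frac{2(1-\beta)^2}{(2-\beta)(3-2\beta)}\le 0$ also makes $B(2B-1)\le D$ immediate.
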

\begin{proof}[\bf Proof of Theorem \ref{th-2.1}]
Let $f\in\mathcal{A}_{\beta}$ be of the form \eqref{eq-1.1}. Then, we have	
\begin{align}\label{eq-2.4}
	\beta\frac{f(z)}{z}+(1-\beta)f^{\prime}(z)=p(z),\; z\in\mathbb{D},
\end{align}
where $p$ is defined in \eqref{eq-2.1}. By equating the coefficients of corresponding powers in the series expansions of $f$ and $p$, we obtain
\begin{align}\label{eq-2.5}
	a_2=\frac{c_1}{(2-\beta)},\;a_3=\frac{c_2}{(3-2\beta)}\; \mbox{and}\;
	a_4=\frac{c_3}{(4-3\beta)}.
\end{align}	
\noindent{\bf Sharp bounds of $\gamma_{1}$:} In view of \eqref{eq-1.3} and \eqref{eq-2.5}, we obtain the desired bound
\begin{align*}
	|\gamma_1|=\;\vline\frac{1}{2}a_2\;\vline=\frac{1}{2(2-\beta)}|c_1|\leq\frac{1}{(2-\beta)}.
\end{align*}
To prove the equality, we consider the function $f_1\in\mathcal{A}_{\beta}$ such that
\begin{align}\label{eq-2.6}
	f_1(z)=z\left(1+2\left(_2F_1\left[1,\frac{1}{1-\beta},\frac{2-\beta}{1-\beta},z\right]\right)\right)= z+ \sum_{n=2}^{\infty}\frac{2}{n- (n-1)\beta}z^n.
\end{align}
One can readily verify that $a_2=\frac{2}{(2-\beta)}$. Consequently, it follows that $|\gamma_1|=\frac{1}{(2-\beta)}$.\vspace{2mm}

\noindent{\bf Sharp bounds of $\gamma_{2}$:} From \eqref{eq-1.3} and \eqref{eq-2.5}, we obtain
\begin{align*}
	|\gamma_2|&=\;\vline\frac{1}{2}\left(a_3 -\frac{3}{2}a^2_{2}\right) \vline\\&=\frac{1}{2(3-2\beta)}\;\vline\; c_2 -\frac{(3-2\beta)}{2(2-\beta)^2} c^2_1 \;\vline.
\end{align*}
We observe that 
\begin{align*}
	0\leq\frac{(3-2\beta)} {2(2-\beta)^2}\leq 1,\;\mbox{for all}\;\; \beta\in[0,1].
\end{align*}
In view of Lemma \ref{lem-2.2}, we obtain
\begin{align*}
	|\gamma_2|\leq\frac{1}{(3-2\beta)}.
\end{align*}
To show the equality, we consider the function $f_2\in\mathcal{A}_{\beta}$ satisfying
\begin{align}\label{eq-2.7}
	\beta\frac{f_2(z)}{z}+(1-\beta)f^{\prime}_2(z)=\frac{1+z^2}{1-z^2},\; z\in\mathbb{D}.
\end{align}
A simple computation shows that $a_2=0$ and $a_3=\frac{2}{(3-2\beta)}$, which implies that $|\gamma_2|=\frac{1}{(3-2\beta)}$.\vspace{2mm}
	
\noindent{\bf Sharp bounds of $\gamma_{3}$:} In view of \eqref{eq-1.3} and \eqref{eq-2.5}, we see that
\begin{align*}
	|\gamma_3|&=\vline\frac{1}{2}\left(a_4 -4a_2 a_3 +\frac{10}{3}a^3_{2} \right)\vline\\&=\frac{1}{2(4-3\beta)}\;\vline\; c_3 -\frac{(4-3\beta)}{(2-\beta)(3-2\beta)}c_1 c_2 +\frac{(4-3\beta)}{3(2-\beta)^3}c_1^3 \;\vline\\&=\frac{1}{2(4-3\beta)}\;\vline\; c_3 -2Bc_1 c_2 +Dc^3_1\;\vline,
\end{align*}
where $B=\frac{(4-3\beta)}{(2-\beta)(3-2\beta)}$ and $D=\frac{(4-3\beta)}{3(2-\beta)^3}$.\vspace{2mm}
	
It is straightforward to verify that
\begin{align*}
	0 \leq \frac{4-3\beta}{(2-\beta)(3-2\beta)} \leq 1 \quad \text{for all } \beta \in [0,1].
\end{align*}
Furthermore, the condition $B(2B-1) \leq D$ is equivalent to the inequality
\[
15 - 27\beta + 16\beta^2 - 3\beta^3 \ge 0,
\]
which is satisfied for all $\beta \in [0,1]$. Similarly, the condition $D \le B$ reduces to $\beta \ge 0$, and hence holds throughout the interval $[0,1]$. Consequently, an application of Lemma~\ref{lem-2.3} yields the following sharp estimate
\begin{align*}
	|\gamma_{3}| \leq \frac{1}{4-3\beta}.
\end{align*}
To establish sharpness, we consider the function $f_3 \in \mathcal{A}_{\beta}$ defined by
\[
\beta \frac{f_3(z)}{z} + (1-\beta) f_3'(z) = \frac{1+z^3}{1-z^3}, \qquad z \in \mathbb{D}.
\]
A straightforward computation shows that $a_2 = a_3 = 0$ and $a_4 = \frac{2}{4-3\beta}$. Consequently, we obtain
\[
|\gamma_3| = \frac{1}{4-3\beta},
\]
which verifies the sharpness of the result. This completes the proof.
\end{proof}
\begin{rem}
	In the limiting case $\beta = 0$, Theorem~\ref{th-2.1} reduces to sharp estimates for the logarithmic coefficients of functions in the class $\mathcal{R}$ of functions with bounded turning. This observation shows that our main result not only extends the corresponding bounds to the broader class $\mathcal{A}_{\beta}$, but also recovers, as a special case, the optimal estimates in a classical and well-studied setting. 
\end{rem}
\begin{cor}
	Let $f \in \mathcal{R}$ be of the form \eqref{eq-1.1}, and let $\gamma_{1}, \gamma_{2}, \gamma_{3}$ be given by \eqref{eq-1.3}. Then
	\[
	|\gamma_{m}| \le \frac{1}{m+1}, \qquad m=1,2,3.
	\]
	All these estimates are sharp.
\end{cor}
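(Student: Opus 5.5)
The corollary is the case $\beta=0$ of Theorem~\ref{th-2.1}. For $\beta=0$ the defining condition \eqref{eq:A_beta} reads $\Re f'(z)>0$, so $\mathcal{A}_0=\mathcal{R}$. Substituting $\beta=0$ into the bound $1/((m+1)-m\beta)$ gives $1/(m+1)$ for $m=1,2,3$. The only real work is to confirm that each step of the proof of Theorem~\ref{th-2.1} survives at $\beta=0$ with the correct formulas.

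Concretely, I would write $f'(z)=p(z)$ with $p\in\mathcal{P}$. Then \eqref{eq-2.5} becomes $a_2=c_1/2$, $a_3=c_2/3$ and $a_4=c_3/4$.

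For $\gamma_1$, we have $|\gamma_1|=|c_1|/4\le 1/2$.

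For $\gamma_2$, I would use the correct expression $\gamma_2=\frac12(a_3-\frac12a_2^2)$ from \eqref{eq-1.3}, not the $\frac32$ that appears in the displayed proof. This gives $\gamma_2=\frac16\bigl(c_2-\frac38c_1^2\bigr)$. Since $v=3/8\in[0,1]$, Lemma~\ref{lem-2.2} yields $|\gamma_2|\le 1/3$.

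For $\gamma_3$, again from \eqref{eq-1.3},
\[
\gamma_3=\tfrac12\bigl(a_4-a_2a_3+\tfrac13a_2^3\bigr)=\tfrac18\bigl(c_3-\tfrac23c_1c_2+\tfrac16c_1^3\bigr).
\]
This has the form $c_3-2Bc_1c_2+Dc_1^3$ with $B=1/3$ and $D=1/6$. The hypotheses of Lemma~\ref{lem-2.3} require $0\le B\le1$ and $B(2B-1)=-1/9\le D\le B$, and both hold. Hence $|\gamma_3|\le 2/8=1/4$.

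The step I expect to be the main obstacle is making sure the $\gamma_2$ and $\gamma_3$ computations use the correct formulas \eqref{eq-1.3}. The displayed proof of Theorem~\ref{th-2.1} uses the $\Gamma_n$-type coefficients instead. As computed above, the admissibility conditions of Lemmas~\ref{lem-2.2} and \ref{lem-2.3} still hold at $\beta=0$.

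For sharpness, I would take the $\beta=0$ versions of the extremal functions from the theorem: $f_1'(z)=\frac{1+z}{1-z}$, $f_2'(z)=\frac{1+z^2}{1-z^2}$ and $f_3'(z)=\frac{1+z^3}{1-z^3}$. These lie in $\mathcal{R}$. For $f_2$ we get $a_2=0$ and $a_3=2/3$, so $\gamma_2=1/3$. For $f_3$ we get $a_2=a_3=0$ and $a_4=1/2$, so $\gamma_3=1/4$. For $f_1$ we get $a_2=1$, so $\gamma_1=1/2$. This completes the plan.
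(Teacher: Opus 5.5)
Your proposal is correct and follows the paper's route: the corollary is the $\beta=0$ case of Theorem~\ref{th-2.1}, with $|c_1|\le 2$ for $\gamma_1$, Lemma~\ref{lem-2.2} for $\gamma_2$, Lemma~\ref{lem-2.3} for $\gamma_3$, and the same extremal functions. You are right to work directly from \eqref{eq-1.3}; note also that your normalization $B=1/3$ is the correct one, whereas the paper's $B=\frac{4-3\beta}{(2-\beta)(3-2\beta)}$ is twice too large (at $\beta=0$ it gives $B=2/3$, so $B(2B-1)=2/9>1/6=D$ and Lemma~\ref{lem-2.3} would not apply), so your check of the lemma's hypotheses is the one that actually goes through.
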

\begin{thm}\label{th-2.2}
Let $f\in\mathcal{A}_{\beta}$ be the form \eqref{eq-1.1} and its inverse $f^{-1}$ is given by \eqref{eq-1.4}. Then, we have
\begin{align*}
	|A_2|\leq\frac{2}{(2-\beta)}\;\;\mbox{and}\;\; |A_3|\leq\frac{2(8-4\beta-\beta^2)}{(2-\beta)^2(3-2\beta)}.
\end{align*}
All bounds are sharp.	
\end{thm}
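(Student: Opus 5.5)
We follow the scheme of the proof of Theorem~\ref{th-2.1}. For $f\in\mathcal{A}_{\beta}$ we write $\beta f(z)/z+(1-\beta)f'(z)=p(z)$ with $p\in\mathcal{P}$. Comparing coefficients gives \eqref{eq-2.5}, namely $a_2=c_1/(2-\beta)$ and $a_3=c_2/(3-2\beta)$. We then substitute these into \eqref{eq-1.5}.

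For $A_2$ there is nothing to do beyond this. Since $A_2=-a_2$, we get $|A_2|=|c_1|/(2-\beta)\le 2/(2-\beta)$. Equality holds for the hypergeometric function $f_1$ in \eqref{eq-2.6}, for which $a_2=2/(2-\beta)$.

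For $A_3$ we have $A_3=-a_3+2a_2^2$, which gives
\[
|A_3|=\frac{1}{3-2\beta}\left|c_2-vc_1^2\right|, \qquad v=\frac{2(3-2\beta)}{(2-\beta)^2}.
\]
The key point is to locate $v$ relative to the ranges in Lemma~\ref{lem-2.2}. We compare $2(3-2\beta)$ with $(2-\beta)^2$: the difference is $2(3-2\beta)-(2-\beta)^2=2-\beta^2>0$ for $\beta\in[0,1]$. Hence $v>1$, and we are in the third case of Lemma~\ref{lem-2.2}, which gives $|c_2-vc_1^2|\le 4v-2$. This yields
\[
|A_3|\le \frac{1}{3-2\beta}\left(\frac{8(3-2\beta)}{(2-\beta)^2}-2\right)=\frac{2\bigl(4(3-2\beta)-(2-\beta)^2\bigr)}{(2-\beta)^2(3-2\beta)}.
\]
Expanding the numerator, $4(3-2\beta)-(2-\beta)^2=12-8\beta-4+4\beta-\beta^2=8-4\beta-\beta^2$, which matches the stated bound.

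For sharpness we take the extremal choice in Lemma~\ref{lem-2.2} when $v>1$, namely $p(z)=(1+z)/(1-z)$, so $c_1=c_2=2$. The corresponding $f$ is again $f_1$ from \eqref{eq-2.6}, which has $a_2=2/(2-\beta)$ and $a_3=2/(3-2\beta)$. Then
\[
A_3=-\frac{2}{3-2\beta}+\frac{8}{(2-\beta)^2}
\]
is positive and equals the bound, so equality is attained.

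The only step that needs care is the sign check $v>1$, which determines which case of Lemma~\ref{lem-2.2} applies; all else is routine algebra.
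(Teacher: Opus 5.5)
Your proposal is correct and follows the paper's proof essentially line by line. It uses the same coefficient relations \eqref{eq-2.5}, the same reduction $|A_3|=\frac{1}{3-2\beta}\bigl|c_2-vc_1^2\bigr|$ with $v=\frac{2(3-2\beta)}{(2-\beta)^2}$, the case $v>1$ of Lemma~\ref{lem-2.2}, and sharpness via $f_1$ with $p(z)=(1+z)/(1-z)$; your explicit verification $2(3-2\beta)-(2-\beta)^2=2-\beta^2>0$ and the expansion of the numerator supply details the paper only asserts.
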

\begin{proof}[\bf Proof of Theorem \ref{th-2.2}]
	We begin with the estimate for $A_2$. In view of \eqref{eq-1.5} and \eqref{eq-2.5}, we have
	\[
	|A_2| = |a_2| = \frac{1}{2-\beta} |c_1|.
	\]
	Since $p \in \mathcal{P}$, it follows that $|c_1| \le 2$, and hence
	\[
	|A_2| \le \frac{2}{2-\beta}.
	\]
	The equality is attained for the function $f_1 \in \mathcal{A}_{\beta}$ defined in \eqref{eq-2.6}, which establishes the sharpness of the bound for $A_2$.
	
	Next, we consider the coefficient $A_3$. Using \eqref{eq-1.5} and \eqref{eq-2.5}, we obtain
	\[
	A_3 = -a_3 + 2a_2^2 = -\frac{c_2}{3-2\beta} + \frac{2c_1^2}{(2-\beta)^2}.
	\]
	Thus,
	\[
	|A_3| = \frac{1}{3-2\beta} \left| c_2 - \frac{2(3-2\beta)}{(2-\beta)^2} c_1^2 \right|.
	\]
	We observe that
	\[
	\frac{2(3-2\beta)}{(2-\beta)^2} \ge 1 \quad \text{for all } \beta \in [0,1].
	\]
	An application of Lemma~\ref{lem-2.2} therefore yields
	\[
	|A_3| \le \frac{2(8 - 4\beta - \beta^2)}{(2-\beta)^2(3-2\beta)}.
	\]
	The equality follows by considering the function $f_1 \in \mathcal{A}_{\beta}$ defined in \eqref{eq-2.6}, for which
	\[
	a_2 = \frac{2}{2-\beta} \; \mbox{and}\; a_3 = \frac{2}{3-2\beta}.
	\]
	This verifies the sharpness of the estimate for $A_3$ and completes the proof.
\end{proof}
\begin{rem}
	In the limiting case $\beta = 0$, Theorem~\ref{th-2.2} reduces to the corresponding sharp bounds for the coefficients of the inverse function in the class $\mathcal{R}$ of functions with bounded turning.
\end{rem}
\begin{cor}
	Let $f \in \mathcal{R}$ be of the form \eqref{eq-1.1}, and let its inverse $f^{-1}$ be given by \eqref{eq-1.4}. Then
	\[
	|A_2| \le 1 \quad \text{and} \quad |A_3| \le \frac{4}{3}.
	\]
	Both estimates are sharp.
\end{cor}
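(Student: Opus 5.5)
The plan is to specialize Theorem~\ref{th-2.2} to $\beta=0$ and then check sharpness directly. Since the class $\mathcal{A}_{\beta}$ in \eqref{eq:A_beta} with $\beta=0$ is exactly $\{f\in\mathcal{A}:\Re f'(z)>0\}=\mathcal{R}$, every $f\in\mathcal{R}$ lies in $\mathcal{A}_0$. Setting $\beta=0$ in Theorem~\ref{th-2.2} gives
\[
|A_2|\le \frac{2}{2-0}=1, \qquad |A_3|\le \frac{2(8-0-0)}{(2-0)^2(3-0)}=\frac{16}{12}=\frac43 .
\]
As a consistency check on the bound for $A_3$, with $\beta=0$ the coefficient appearing in Lemma~\ref{lem-2.2} is $v=2\cdot 3/4=3/2>1$. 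The lemma then gives $|c_2-\tfrac32 c_1^2|\le 4v-2=4$, so $|A_3|\le 4/3$, which agrees.

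For sharpness, I will take the function $f_1$ from \eqref{eq-2.6} with $\beta=0$:
\[
f_1(z)=z+\sum_{n=2}^{\infty}\frac{2}{n}z^n=-z-2\log(1-z).
\]
Its derivative is $f_1'(z)=-1+\frac{2}{1-z}=\frac{1+z}{1-z}$, which has positive real part in $\mathbb{D}$, so $f_1\in\mathcal{R}$. Its coefficients are $a_2=1$ and $a_3=2/3$. Then \eqref{eq-1.5} gives $A_2=-a_2=-1$ and $A_3=-a_3+2a_2^2=-\tfrac23+2=\tfrac43$. Hence both bounds are attained by $f_1$.

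There is no real obstacle here. The only point needing care is the identification $\mathcal{A}_0=\mathcal{R}$, together with confirming that the extremal function from Theorem~\ref{th-2.2} does not degenerate at $\beta=0$. At $\beta=0$ the hypergeometric representation in \eqref{eq-2.6} has parameters $1,1;2$, and it reduces to the logarithm above, so $f_1$ remains a genuine member of $\mathcal{R}$.
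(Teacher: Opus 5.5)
Your proposal is correct and follows the paper's route exactly: the corollary is Theorem~\ref{th-2.2} at $\beta=0$ (where $v=3/2>1$ in Lemma~\ref{lem-2.2}), with sharpness from $f_1(z)=z+\sum_{n\ge2}\frac{2}{n}z^n=-z-2\log(1-z)$, which satisfies $f_1'(z)=\frac{1+z}{1-z}$. One small caveat concerns your closing remark that the hypergeometric form in \eqref{eq-2.6} ``reduces to the logarithm above'': this is not literally true, since $z\bigl(1+2\,{}_2F_1(1,1;2;z)\bigr)=z-2\log(1-z)=3z+\cdots$ (the closed form there should read $z\bigl(2\,{}_2F_1-1\bigr)$), so you should rely, as your argument in fact does, on the series expansion and the direct computation of $f_1'$.
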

\begin{thm}\label{th-2.3}
	Let $f \in \mathcal{A}_{\beta}$ be of the form \eqref{eq-1.1}, and let $\Gamma_1, \Gamma_2$ be given by \eqref{eq-1.7}. Then
	\[
	|\Gamma_1| \le \frac{1}{2-\beta}
	\quad \text{and} \quad
	|\Gamma_2| \le \frac{5 - 2\beta - \beta^2}{(2-\beta)^2 (3-2\beta)}.
	\]
	Both estimates are sharp.
\end{thm}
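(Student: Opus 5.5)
We follow the pattern of the proofs of Theorems~\ref{th-2.1} and \ref{th-2.2}. Write $\beta f(z)/z+(1-\beta)f'(z)=p(z)$ with $p\in\mathcal{P}$ as in \eqref{eq-2.1}, so that \eqref{eq-2.5} gives $a_2=c_1/(2-\beta)$ and $a_3=c_2/(3-2\beta)$. For $\Gamma_1$ we use \eqref{eq-1.7}: $|\Gamma_1|=\tfrac12|a_2|=|c_1|/(2(2-\beta))$. Since $|c_1|\le 2$, this gives $|\Gamma_1|\le 1/(2-\beta)$. Equality holds for the hypergeometric function $f_1$ of \eqref{eq-2.6}, for which $a_2=2/(2-\beta)$.

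For $\Gamma_2$, substituting \eqref{eq-2.5} into \eqref{eq-1.7} gives
\[
|\Gamma_2|=\frac{1}{2(3-2\beta)}\left|c_2-\frac{3(3-2\beta)}{2(2-\beta)^2}c_1^2\right| .
\]
We then set $v=\frac{3(3-2\beta)}{2(2-\beta)^2}$ and check that $v>1$ for every $\beta\in[0,1]$. This is equivalent to $9-6\beta>8-8\beta+2\beta^2$, that is, $2\beta^2-2\beta-1<0$. The quadratic has roots $(1\pm\sqrt3)/2$, which lie outside $[0,1]$, and at $\beta=0$ the expression is $-1<0$, so the inequality holds on the whole interval. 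Lemma~\ref{lem-2.2} in the case $v>1$ then gives $|c_2-vc_1^2|\le 4v-2$. Hence
\[
|\Gamma_2|\le\frac{4v-2}{2(3-2\beta)}=\frac{3(3-2\beta)-(2-\beta)^2}{(2-\beta)^2(3-2\beta)}=\frac{5-2\beta-\beta^2}{(2-\beta)^2(3-2\beta)},
\]
which is the claimed bound.

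For sharpness we again use $f_1$, which corresponds to $p(z)=(1+z)/(1-z)$, so $c_1=c_2=2$. This is the extremal case of Lemma~\ref{lem-2.2} for $v>1$, since $|2-4v|=4v-2$. Its coefficients $a_2=2/(2-\beta)$ and $a_3=2/(3-2\beta)$ give
\[
|\Gamma_2|=\tfrac12\left|\tfrac{2}{3-2\beta}-\tfrac{6}{(2-\beta)^2}\right|,
\]
which equals the bound above. The only point needing real care is confirming $v>1$ on all of $[0,1]$, since that inequality selects the correct branch of Lemma~\ref{lem-2.2}.
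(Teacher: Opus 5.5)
Your proposal is correct and follows the paper's proof essentially verbatim: you express $\Gamma_1,\Gamma_2$ through \eqref{eq-2.5}, apply $|c_1|\le 2$ and Lemma~\ref{lem-2.2} with $v=\frac{3(3-2\beta)}{2(2-\beta)^2}$, and take $f_1$ from \eqref{eq-2.6} as the extremal function. Your explicit check that $v>1$ on $[0,1]$ (via $2\beta^2-2\beta-1<0$) is a detail the paper leaves implicit, and it is exactly what justifies using the $4v-2$ branch.
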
	
\begin{proof}[\bf Proof of Theorem \ref{th-2.3}]
We begin with the estimate for $\Gamma_1$. From \eqref{eq-1.7} and \eqref{eq-2.5}, we obtain
\begin{equation}\label{eq-2.8}
	|\Gamma_1| = \frac{1}{2}|a_2| = \frac{1}{2(2-\beta)}|c_1| \le \frac{1}{2-\beta}.
\end{equation}

Next, we consider the coefficient $\Gamma_2$. Using \eqref{eq-1.7} and \eqref{eq-2.5}, we have
\[
\Gamma_2 = -\frac{1}{2}\left(a_3 - \frac{3}{2}a_2^2\right)
= -\frac{1}{2}\left(\frac{c_2}{3-2\beta} - \frac{3}{2(2-\beta)^2}c_1^2\right).
\]
Thus,
\[
|\Gamma_2| = \frac{1}{2(3-2\beta)} \left| c_2 - \frac{3(3-2\beta)}{2(2-\beta)^2} c_1^2 \right|.
\]
An application of Lemma~\ref{lem-2.2} yields
\begin{equation}\label{eq-2.9}
	|\Gamma_2| \le \frac{5 - 2\beta - \beta^2}{(2-\beta)^2(3-2\beta)}.
\end{equation}

The sharpness of the estimates \eqref{eq-2.8} and \eqref{eq-2.9} follows from the extremal function $f_1 \in \mathcal{A}_{\beta}$ defined in \eqref{eq-2.6}. This completes the proof.
\end{proof}
\begin{cor}
	Let $f \in \mathcal{R}$ be of the form \eqref{eq-1.1}, and let $\Gamma_1, \Gamma_2$ be given by \eqref{eq-1.7}. Then
	\[
	|\Gamma_1| \le \frac{1}{2}
	\quad \text{and} \quad
	|\Gamma_2| \le \frac{5}{12}.
	\]
	Both estimates are sharp.
\end{cor}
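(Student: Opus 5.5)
The corollary is the specialization $\beta=0$ of Theorem~\ref{th-2.3}, since $\mathcal{A}_0=\mathcal{R}$ by \eqref{eq:A_beta}. The plan is to substitute $\beta=0$ into both bounds and then exhibit the extremal function explicitly.

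First, the bound for $\Gamma_1$: at $\beta=0$ it becomes $1/(2-0)=1/2$. For $\Gamma_2$, the expression $(5-2\beta-\beta^2)/\bigl((2-\beta)^2(3-2\beta)\bigr)$ evaluates to $5/(4\cdot 3)=5/12$.

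As a sanity check on the case of Lemma~\ref{lem-2.2} used, I would redo the computation directly for $\beta=0$. From \eqref{eq-2.5} we get $a_2=c_1/2$ and $a_3=c_2/3$. Hence
\[
|\Gamma_2|=\tfrac16\left|c_2-\tfrac98 c_1^2\right|.
\]
Here $v=9/8>1$, so Lemma~\ref{lem-2.2} gives $|c_2-vc_1^2|\le 4v-2=5/2$. This yields $|\Gamma_2|\le 5/12$, consistent with the theorem.

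For sharpness, take the function $f_1$ of \eqref{eq-2.6} with $\beta=0$. Then
\[
f_1(z)=z+\sum_{n\ge2}\frac{2}{n}z^n=-z-2\log(1-z),
\]
so $f_1'(z)=(1+z)/(1-z)$ and $f_1\in\mathcal{R}$. Its coefficients are $a_2=1$ and $a_3=2/3$. Plugging these into \eqref{eq-1.7} gives $\Gamma_1=-1/2$ and
\[
\Gamma_2=-\tfrac12\left(\tfrac23-\tfrac32\right)=\tfrac{5}{12},
\]
so both bounds are attained.

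There is no real obstacle here. The only point needing care is to confirm that the correct branch of Lemma~\ref{lem-2.2} (namely $v>1$) applies, which the direct computation above verifies.
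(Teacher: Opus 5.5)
Your proposal is correct and follows the paper's route. The corollary is the case $\beta=0$ of Theorem~\ref{th-2.3}, whose proof applies Lemma~\ref{lem-2.2} in the branch $v>1$ (here $v=9/8$) and uses $f_1$ from \eqref{eq-2.6} for sharpness, exactly as you do. Your explicit checks that $v>1$ and that $f_1(z)=-z-2\log(1-z)$ gives $\Gamma_1=-1/2$ and $\Gamma_2=5/12$ fill in details the paper leaves implicit.
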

\section{\bf Moduli differences of coefficients for inverse functions in the class $\mathcal{A}_{\beta}$}

In $1985$, de Branges \cite{Branges-AM-1985} proved the celebrated Bieberbach conjecture by showing that for every function $f \in \mathcal{S}$ of the form \eqref{eq-1.1}, the estimate $|a_n| \le n$ holds for all $n \ge 2$, with equality attained by the Koebe function $k(z) = z/(1-z)^2$ and its rotations. This result naturally led to the investigation of successive coefficient differences, namely whether the inequality
\[
\bigl||a_{n+1}| - |a_n|\bigr| \le 1
\]
holds for functions in $\mathcal{S}$ when $n \ge 2$.

This problem was first considered by Goluzin \cite{Goluzin-1946} in connection with early approaches to the Bieberbach conjecture. Subsequently, Hayman \cite{Hayman-1963} showed that there exists an absolute constant $A$ such that
\[
\bigl||a_{n+1}| - |a_n|\bigr| \le A
\]
for all $f \in \mathcal{S}$. The best known bound to date is $A = 3.61$, due to Grinspan \cite{Grinspan-1976}. However, for the full class $\mathcal{S}$, the sharp estimate is known only for $n=2$ (see \cite[Theorem 3.11]{Duren-1983-NY}), namely
\[
-1 \le |a_3| - |a_2| \le 1.029\ldots
\]

For starlike functions $f \in \mathcal{S}^*$, Pommerenke \cite{Ch. Pommerenke-1971} conjectured that $\bigl||a_{n+1}| - |a_n|\bigr| \le 1$, which was later proved by Leung \cite{Leung-BLMS-1978}. For convex functions, Li and Sugawa \cite{Li-Sugawa-CMFT-2017} established sharp bounds for $|a_{n+1}| - |a_n|$ for $n=2,3$.

Although sharp estimates for the inverse coefficients $|A_n|$ are known for $f \in \mathcal{S}$ (see \cite{Lowner-IMA-1923}), the corresponding problem for successive differences, namely the bounds for $|A_{n+1}| - |A_n|$, remains open for several important classes, including the full class $\mathcal{S}$. This motivates the study of such differences for subclasses of univalent functions, even for small values of $n$.

In what follows, we employ Lemma~\ref{lem-2.1} to derive sharp estimates for the differences $|A_2| - |A_1|$ and $|A_3| - |A_2|$, where $A_1$, $A_2$, and $A_3$ are the coefficients of the inverse function associated with functions in the class $\mathcal{A}_{\beta}$.
\begin{thm}\label{th-3.1}
	Let $0 \le \beta \le 1$, and let $f \in \mathcal{A}_{\beta}$ be of the form \eqref{eq-1.1}. Then
	\[
	-1 \le |A_2| - |A_1| \le \frac{\beta}{2-\beta}.
	\]
	Both inequalities are sharp.
\end{thm}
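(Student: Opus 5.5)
The plan is to reduce everything to the single Carath\'eodory coefficient $c_1$, using the representation already set up in the proof of Theorem~\ref{th-2.1}. The normalization $f^{-1}(w)=w+\sum_{n\ge 2}A_nw^n$ in \eqref{eq-1.4} means the leading inverse coefficient is $A_1=1$, so the statement to be proved is
\[
-1\le |A_2|-1\le \frac{\beta}{2-\beta}.
\]
By \eqref{eq-1.5} we have $A_2=-a_2$. For $f\in\mathcal{A}_\beta$, write
\[
\beta\frac{f(z)}{z}+(1-\beta)f'(z)=p(z)
\]
with $p\in\mathcal{P}$, as in \eqref{eq-2.4}. Then \eqref{eq-2.5} gives
\[
|A_2|=|a_2|=\frac{|c_1|}{2-\beta}.
\]

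The key steps, in order, are as follows.

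\emph{Upper bound.} Since $|c_1|\le 2$ for $p\in\mathcal{P}$, we get $|A_2|\le 2/(2-\beta)$, exactly as in Theorem~\ref{th-2.2}. Hence
\[
|A_2|-|A_1|\le \frac{2}{2-\beta}-1=\frac{\beta}{2-\beta}.
\]

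\emph{Lower bound.} Trivially $|A_2|\ge 0$, so $|A_2|-|A_1|\ge -1$. One could phrase both bounds through Lemma~\ref{lem-2.1} by taking $B_2=B_3=0$ there, but that is unnecessary. Only $c_1$ enters, and the full range $|c_1|\in[0,2]$ is realized within $\mathcal{P}$.

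\emph{Sharpness.} For the upper bound I will use the extremal function $f_1$ from \eqref{eq-2.6}. It corresponds to $p(z)=(1+z)/(1-z)$ and has $a_2=2/(2-\beta)$, so $|A_2|-|A_1|=\beta/(2-\beta)$. For the lower bound I will use $f_2$ from \eqref{eq-2.7}. It corresponds to $p(z)=(1+z^2)/(1-z^2)$, so $c_1=0$, hence $a_2=0=A_2$ and $|A_2|-|A_1|=-1$. Both functions were already shown to lie in $\mathcal{A}_\beta$, because their associated $p$ belong to $\mathcal{P}$.

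There is no real obstacle here. The only point requiring care is the convention $A_1=1$ coming from the normalization of $f^{-1}$. Once that is fixed, the result follows from the sharp bound $|c_1|\le 2$ together with the two explicit extremal functions.
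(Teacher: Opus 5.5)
Your proposal is correct and matches the paper's proof: both write $|A_2|=|a_2|=|c_1|/(2-\beta)$ with $A_1=1$, get the upper bound from $|c_1|\le 2$ and the lower bound from $|A_2|\ge 0$, and use $f_1$ (with $p(z)=(1+z)/(1-z)$) and $f_2$ (with $c_1=0$) for sharpness.
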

\begin{proof}[\bf Proof of Theorem \ref{th-3.1}]
	From \eqref{eq-1.5} and \eqref{eq-2.5}, we obtain
	\[
	|A_2| - |A_1| = |a_2| - 1 = \frac{|c_1|}{2-\beta} - 1 \le \frac{2}{2-\beta} - 1 = \frac{\beta}{2-\beta}.
	\]
	The equality is attained for the function $f_1 \in \mathcal{A}_{\beta}$ defined in \eqref{eq-2.6}, for which $a_2 = \frac{2}{2-\beta}$.
	
	On the other hand,
	\[
	|A_1| - |A_2| = 1 - |a_2| = 1 - \frac{|c_1|}{2-\beta} \leq 1,
	\]
	which implies that
	\[
	|A_2| - |A_1| \ge -1.
	\]
	The equality is achieved for the function $f_2 \in \mathcal{A}_{\beta}$ defined in \eqref{eq-2.7}, for which $a_2 = 0$. This establishes the sharpness of both bounds and completes the proof.
\end{proof}

\begin{rem}
	In the limiting case $\beta = 0$, Theorem~\ref{th-3.1} reduces to the corresponding sharp bounds for the difference of inverse coefficients in the class $\mathcal{R}$ of functions with bounded turning.
\end{rem}

\begin{cor}
	Let $f \in \mathcal{R}$ be of the form \eqref{eq-1.1}. Then
	\[
	-1 \le |A_2| - |A_1| \le 0.
	\]
	Both inequalities are sharp.
\end{cor}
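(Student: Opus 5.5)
The corollary is the specialization $\beta=0$ of Theorem~\ref{th-3.1}. Since $\mathcal{A}_0$ coincides with $\mathcal{R}$ by \eqref{eq:A_beta}, the plan is to restate the proof of that theorem in this case and to check that both extremal functions still belong to $\mathcal{R}$.

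First, take $\beta=0$ in \eqref{eq-2.4}. This gives $f'(z)=p(z)$ with $p\in\mathcal{P}$, so by \eqref{eq-2.5} we have $a_2=c_1/2$. Since $A_1=1$ and $A_2=-a_2$ by \eqref{eq-1.5}, we obtain
\[
|A_2|-|A_1|=\frac{|c_1|}{2}-1 .
\]
The estimate $0\le |c_1|\le 2$ then yields both inequalities $-1\le |A_2|-|A_1|\le 0$ at once. Equivalently, one can substitute $\beta=0$ into the bound $\beta/(2-\beta)$ of Theorem~\ref{th-3.1}, which gives $0$.

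For sharpness we need two functions in $\mathcal{R}$.
\begin{itemize}
\item[] For the upper bound, take $f_1$ from \eqref{eq-2.6} with $\beta=0$, namely $f_1'(z)=(1+z)/(1-z)$. Then $f_1(z)=-z-2\log(1-z)$ and $a_2=1$, so $|A_2|-|A_1|=0$.
\item[] For the lower bound, take $f_2$ from \eqref{eq-2.7} with $\beta=0$, namely $f_2'(z)=(1+z^2)/(1-z^2)$. Then $a_2=0$, so $|A_2|-|A_1|=-1$.
\end{itemize}
Both derivatives are Carath\'eodory functions, so $f_1,f_2\in\mathcal{R}$.

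There is no real obstacle here. The one point that needs care is membership of the extremal functions in $\mathcal{R}$, and this holds because their derivatives are the standard Möbius-type elements of $\mathcal{P}$.
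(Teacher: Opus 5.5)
Your proof is correct and follows the paper's route exactly: the corollary is Theorem~\ref{th-3.1} at $\beta=0$, obtained from $|A_2|-|A_1|=|c_1|/2-1$ with $0\le |c_1|\le 2$, and sharpness is witnessed by $f_1$ (where $a_2=1$) and $f_2$ (where $a_2=0$). Your closed form $f_1(z)=-z-2\log(1-z)$ agrees with the series in \eqref{eq-2.6}. It also sidesteps the misprinted hypergeometric expression there, which should read $z\bigl(2\,{}_2F_1-1\bigr)$ rather than $z\bigl(1+2\,{}_2F_1\bigr)$.
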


\begin{thm}\label{th-3.2}
Let $0\leq\beta\leq 1$. If $f\in\mathcal{A}_{\beta}$ is given by \eqref{eq-1.1} and $A_2, A_3$ are given by \eqref{eq-1.5}, then we have 
\begin{align*}
	|A_3|-|A_2|\leq\begin{cases}
		\dfrac{2}{(3-2\beta)} \hspace{2.4cm}\mbox{for}\;\; 0\leq\beta<\frac{7-\sqrt{7}}{8} \vspace{2mm}\\ 
		\dfrac{(4+6\beta-6\beta^2)}{(2-\beta)^2(3-2\beta)} \hspace{1.2cm} \mbox{for}\;\; \frac{7-\sqrt{7}}{8}\leq\beta\leq 1,
	\end{cases}
\end{align*}
and
\begin{align*}
	|A_3|-|A_2|\geq-\frac{1}{\sqrt{3-2\beta}}.
\end{align*}
All the inequalities are sharp.
\end{thm}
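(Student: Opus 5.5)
We would reduce the functional $|A_3|-|A_2|$ to the form $\Psi_{\pm}(c_1,c_2)$ of Lemma~\ref{lem-2.1} and then check which case of that lemma applies. Let $f\in\mathcal{A}_\beta$ and write $\beta f(z)/z+(1-\beta)f'(z)=p(z)$ with $p\in\mathcal{P}$, as in \eqref{eq-2.4}. Then \eqref{eq-2.5} together with \eqref{eq-1.5} gives $A_2=-c_1/(2-\beta)$ and $A_3=-c_2/(3-2\beta)+2c_1^2/(2-\beta)^2$. Hence
\[
|A_3|-|A_2|=|B_2c_1^2+B_3c_2|-B_1|c_1|=\Psi_+(c_1,c_2),
\]
where
\[
B_1=\frac{1}{2-\beta}>0,\qquad B_2=\frac{2}{(2-\beta)^2},\qquad B_3=-\frac{1}{3-2\beta}\in\mathbb{R}.
\]
Both the upper and the lower estimate then follow from \eqref{eq-2.2} and \eqref{eq-2.3}. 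We also record
\[
B_4=|4B_2+2B_3|=\frac{16-8\beta-2\beta^2}{(2-\beta)^2(3-2\beta)}.
\]

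\textbf{Upper bound.} We test the condition $|2B_2+B_3|\ge |B_3|+B_1$. Here $2B_2+B_3>0$, so we may clear the common denominator $(2-\beta)^2(3-2\beta)$. The condition then becomes $4(3-2\beta)-2(2-\beta)^2\ge (2-\beta)(3-2\beta)$, which is a quadratic inequality in $\beta$, namely $4\beta^2-7\beta+2\le 0$. The threshold separating the two cases is therefore the root of this quadratic lying in $[0,1]$. My computation gives this root as $(7-\sqrt{17})/8$ rather than the $(7-\sqrt7)/8$ printed in the statement. This discrepancy is the step I would recheck most carefully, since the value of the threshold depends on this expansion.

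For $\beta$ above the threshold we use the first case of \eqref{eq-2.2}. A direct expansion gives
\[
|4B_2+2B_3|-2B_1=\frac{4+6\beta-6\beta^2}{(2-\beta)^2(3-2\beta)}.
\]
Below the threshold the second case applies and gives $2|B_3|=2/(3-2\beta)$.

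For sharpness we use the extremal functions already constructed. In the first case the function $f_1$ of \eqref{eq-2.6}, with $c_1=c_2=2$, attains the bound. In the second case the function $f_2$ of \eqref{eq-2.7} attains it, since $a_2=0$ and $a_3=2/(3-2\beta)$.

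\textbf{Lower bound.} We apply \eqref{eq-2.3}. A short calculation gives $B_4+2|B_3|=8/(2-\beta)^2$. The middle case requires $B_1^2\le 2|B_3|(B_4+2|B_3|)$, which reduces to $3-2\beta\le 16$ and so always holds. The middle case then yields
\[
-(|A_3|-|A_2|)\le 2B_1\sqrt{\frac{2|B_3|}{B_4+2|B_3|}}=\frac{2}{2-\beta}\cdot\frac{2-\beta}{2\sqrt{3-2\beta}}=\frac{1}{\sqrt{3-2\beta}}.
\]

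For sharpness of the lower bound we invoke the sharpness part of Lemma~\ref{lem-2.1}. It provides some $p\in\mathcal{P}$, with explicit coefficients $(c_1,c_2)$, for which equality holds in \eqref{eq-2.3}. This $p$ must be transported back to $\mathcal{A}_\beta$. Every $p\in\mathcal{P}$ arises from the function
\[
f(z)=z+\sum_{n\ge2}\frac{c_{n-1}}{n-(n-1)\beta}z^n,
\]
which solves $\beta f/z+(1-\beta)f'=p$ and therefore lies in $\mathcal{A}_\beta$. So the extremal $p$ produces an extremal $f$.

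The main obstacle is bookkeeping rather than ideas. One must get the threshold in $\beta$ and the case conditions of Lemma~\ref{lem-2.1} exactly right, and in particular verify which root of the quadratic governs the switch between the two upper estimates.
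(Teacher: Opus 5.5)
Your argument is the paper's own argument. You use the same reduction of $|A_3|-|A_2|$ to $\Psi_+(c_1,c_2)$ in Lemma~\ref{lem-2.1}; your $B_1,B_2,B_3$ are the paper's divided by $2-\beta$, which only rescales $\Psi_{\pm}$ and leaves every case condition unchanged. You also use the same middle case of \eqref{eq-2.3} for the lower bound (the first case is excluded, since it would require $2-\beta\ge 8$), and the same functions $f_1$, $f_2$ for sharpness of the upper bound.

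The discrepancy you flagged is an error in the statement, not in your computation, and you should assert it rather than hedge. The condition $|2B_2+B_3|\ge |B_3|+B_1$ is equivalent to $4\beta^2-7\beta+2\le 0$; the paper reaches exactly this inequality. Its discriminant is $49-32=17$, so the switch occurs at $\beta_0=(7-\sqrt{17})/8\approx 0.360$, not at $(7-\sqrt{7})/8\approx 0.544$. Equivalently, $\beta_0$ is where the two candidate maxima $2/(3-2\beta)$ and $(4+6\beta-6\beta^2)/((2-\beta)^2(3-2\beta))$ coincide, i.e. where $4+6\beta-6\beta^2=2(2-\beta)^2$.

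Consequently the theorem as printed is false for $\beta_0<\beta<(7-\sqrt{7})/8$. For example, at $\beta=1/2$ the function $f_1$ has $a_2=4/3$ and $a_3=1$. Hence $A_3=23/9$ and $|A_3|-|A_2|=23/9-4/3=11/9$, which exceeds the printed bound $2/(3-2\beta)=1$. What you have proved, including sharpness, is the corrected statement with threshold $(7-\sqrt{17})/8$. The lower bound and the corollary for $\beta=0$ are unaffected.

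For sharpness of the lower bound, transferring extremality through the bijection $p\mapsto f$, $a_n=c_{n-1}/(n-(n-1)\beta)$, is valid. To make it concrete: equality in the middle case of \eqref{eq-2.3} is attained at $c_1=(2-\beta)/\sqrt{3-2\beta}$ and $c_2=2$, for which $B_2c_1^2+B_3c_2=0$ and $B_1c_1=1/\sqrt{3-2\beta}$. These coefficients come from $p(z)=(1+c_1z+z^2)/(1-z^2)$. This $p$ equals $\frac{2+c_1}{4}\cdot\frac{1+z}{1-z}+\frac{2-c_1}{4}\cdot\frac{1-z}{1+z}$, and it lies in $\mathcal{P}$ because $c_1\le 2$ for $\beta\in[0,1]$. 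This is exactly the extremal function the paper uses.
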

\begin{proof}
In view of \eqref{eq-1.5} and \eqref{eq-2.5}, an elementary calculation shows that
\begin{align}\label{eq-3.1}
	\nonumber|A_3|-|A_2|&=\;\vline\; -a_{3} +2a^2_{2}\;\vline - \;\vline\; - a_{2}\;\vline \\&\nonumber=\;\vline\; -\frac{c_2}{(3-2\beta)} +\frac{2c^2_1}{(2-\beta)^2}\;\vline - \;\vline\; -\frac{c_1}{(2-\beta)}\;\vline\\&\nonumber =\;\vline\; \frac{2}{(2-\beta)^2}c^2_1 -\frac{1}{(3-2\beta)}c_2\;\vline - \;\vline\; \frac{1}{(2-\beta)} c_1\;\vline\\&\nonumber = \frac{1}{(2-\beta)} \left(|B_2 c^2_1 +B_3c_2| - |B_1c_1| \right)\\& :=\frac{1}{(2-\beta)}\Psi_{+}(c_1,c_2),
\end{align}
where
\begin{align*}
	B_1:=1,\;\; B_2:=\frac{2}{(2-\beta)} \;\;\mbox{and}\;\; B_3:=-\frac{(2-\beta)}{(3-2\beta)}.
\end{align*}
\noindent{\bf Estimate of the upper bound:} For the upper bound, a simple calculation shows that
\begin{align*}
	|2B_2 + B_3| = \frac{8-4\beta-\beta^2}{(2-\beta)(3-2\beta)} \quad \text{and} \quad |B_3| + B_1 = \frac{5-3\beta}{3-2\beta}.
\end{align*}
The inequality $|2B_2 + B_3| \geq |B_3| + B_1$, implies that $2-7\beta+4\beta^2\leq 0$, which is true for $\frac{7-\sqrt{7}}{8}\leq \beta\leq 1$. Thus, by Lemma \ref{lem-2.1}, we obtain
\begin{align*}
	\Psi_{+}(c_1,c_2)\leq |4B_2 +2B_3|-2B_1=\frac{(4+6\beta- 6\beta^2)}{(2-\beta)(3-2\beta)}.
\end{align*}
For $0\leq\beta<\frac{7-\sqrt{7}}{8}$, applying Lemma \ref{lem-2.1} we get
\begin{align*}
	\Psi_{+}(c_1,c_2)\leq 2|B_3|=\frac{2(2-\beta)}{(3-2\beta)}.
\end{align*}
Consequently, applying \eqref{eq-3.1}, we obtain
\begin{align}\label{eq-3.2}
	|A_3|-|A_2|\leq\begin{cases}
		\dfrac{2}{(3-2\beta)} \hspace{2.4cm}\mbox{for}\;\; 0\leq\beta<\frac{7-\sqrt{7}}{8} \vspace{2mm}\\ 
		\dfrac{(4+6\beta-6\beta^2)}{(2-\beta)^2(3-2\beta)} \hspace{1cm}\mbox{for}\;\; \frac{7-\sqrt{7}}{8}\leq\beta\leq 1.
	\end{cases}
\end{align}
The first inequality in \eqref{eq-3.2} is sharp for the function $f_2 \in \mathcal{A}_{\beta}$ defined in \eqref{eq-2.7}, which satisfies $a_2 = 0$. The second inequality in \eqref{eq-3.2} is attained by the function $f_1 \in \mathcal{A}_{\beta}$ defined in \eqref{eq-2.6}, where the coefficients are given by $a_2 = \frac{2}{2-\beta}$ and $a_3 = \frac{2}{3-2\beta}$. \vspace{2mm}
	
\noindent{\bf Estimate of the lower bound:} For the lower bound, we have
\begin{align*}
	B_4=|4B_2 +2B_3|=\frac{2(8-4\beta-\beta^2)}{(2-\beta)(3-2\beta)} \hspace{0.5cm} \mbox{and}\hspace{0.5cm} B_4 +2|B_3|=\frac{8}{(2-\beta)}.
\end{align*}
It is easy to see that, the inequality $B_1\geq B_4 +2|B_3|$ implies that $\beta+6\leq0$ for all $\beta\in[0,1]$, which is not true. Again, we have
\begin{align*}
	2|B_3|(B_4 +2|B_3|) =\frac{16}{(3-2\beta)}.
\end{align*}
The condition $B_1^2 \leq 2|B_3|(B_4 + 2|B_3|)$ implies that $13 + 2\beta \geq 0$, which is clearly satisfied for all $\beta \in [0,1]$. Thus, by applying Lemma \ref{lem-2.1}, we obtain\begin{align*}\Psi_{-}(c_1,c_2) \leq 2B_1 \sqrt{\dfrac{2|B_3|}{B_4+2|B_3|}} = \frac{2-\beta}{\sqrt{3-2\beta}}.\end{align*}Furthermore, it follows from the symmetry of the functional that\begin{align*}\Psi_{+}(c_1,c_2) = -\Psi_{-}(c_1,c_2) \geq -\frac{2-\beta}{\sqrt{3-2\beta}}.\end{align*}Consequently, substituting these into \eqref{eq-3.1}, we conclude that\begin{align}\label{eq-3.3}|A_3| - |A_2| \geq -\frac{1}{\sqrt{3-2\beta}}.\end{align}Equality in \eqref{eq-3.3} is attained by the function $f \in \mathcal{A}$ defined via \eqref{eq-2.4} with the corresponding Carathéodory function\begin{align*}p(z) = \frac{1 + \frac{2-\beta}{\sqrt{3-2\beta}}z + z^2}{1 - z^2}, \quad z \in \mathbb{D}.\end{align*}This completes the proof
\end{proof}
\begin{rem}
	In the limiting case $\beta = 0$, Theorem~\ref{th-3.2} reduces to the corresponding sharp bounds for the differences of the inverse coefficients in the class $\mathcal{R}$ of functions with bounded turning.
\end{rem}
\begin{cor}
If $f\in\mathcal{R}$ is given by \eqref{eq-1.1} and $A_2, A_3$ are given by \eqref{eq-1.5}, then we have 
\begin{align*}
	-\frac{1}{\sqrt{3}}\leq|A_3|-|A_2|\leq \frac{2}{3}.
\end{align*}
Both inequalities are sharp.
\end{cor}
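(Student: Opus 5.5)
The plan is to specialize Theorem~\ref{th-3.2} to $\beta=0$, since $\mathcal{A}_0=\mathcal{R}$. Under this specialization the relations \eqref{eq-2.5} become $a_2=c_1/2$ and $a_3=c_2/3$, where $f'=p\in\mathcal{P}$.

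\textbf{Upper bound.} Since $0<\frac{7-\sqrt7}{8}\approx 0.544$, the value $\beta=0$ falls in the first case of Theorem~\ref{th-3.2}. That case gives $|A_3|-|A_2|\le \frac{2}{3-2\beta}=\frac23$.

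\textbf{Lower bound.} The lower estimate of Theorem~\ref{th-3.2} gives $|A_3|-|A_2|\ge -\frac{1}{\sqrt{3-2\beta}}=-\frac1{\sqrt3}$. As a consistency check, with $B_1=1$, $B_2=1$, $B_3=-\tfrac23$ one gets $B_4=\tfrac{8}{3}$. Then $B_1^2=1\le 2|B_3|(B_4+2|B_3|)=\tfrac{16}{3}$, which confirms that the second case of \eqref{eq-2.3} applies.

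\textbf{Sharpness.} I would exhibit explicit extremal functions rather than rely only on the remarks in the theorem.

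\emph{Upper bound.} Take $f_2\in\mathcal{R}$ with $f_2'(z)=\frac{1+z^2}{1-z^2}$. Then $c_1=0$ and $c_2=2$, so $a_2=0$ and $a_3=\tfrac23$. Hence $A_2=0$ and $A_3=-\tfrac23$, giving $|A_3|-|A_2|=\tfrac23$.

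\emph{Lower bound.} Take $f'(z)=p(z)=\frac{1+\frac{2}{\sqrt3}z+z^2}{1-z^2}$. This $p$ belongs to $\mathcal{P}$ because it is the convex combination
\[
\tfrac12\left(1+\tfrac1{\sqrt3}\right)\frac{1+z}{1-z}+\tfrac12\left(1-\tfrac1{\sqrt3}\right)\frac{1-z}{1+z},
\]
and both weights are nonnegative since $\tfrac{1}{\sqrt3}\le 1$. Its coefficients are $c_1=\tfrac{2}{\sqrt3}$ and $c_2=2$, so $a_2=\tfrac1{\sqrt3}$ and $a_3=\tfrac23$. Then $A_3=-a_3+2a_2^2=0$ and $|A_2|=\tfrac1{\sqrt3}$, so $|A_3|-|A_2|=-\tfrac1{\sqrt3}$.

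The only step needing care is confirming that this $p$ lies in $\mathcal{P}$, which the convex-combination decomposition settles. Everything else is direct substitution of $\beta=0$.
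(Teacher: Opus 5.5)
Your proposal is correct and follows the paper's route: the corollary is Theorem~\ref{th-3.2} at $\beta=0$, which the paper obtains from Lemma~\ref{lem-2.1}, and you use the same extremal functions, $f_2$ and $p(z)=\bigl(1+\tfrac{2}{\sqrt3}z+z^2\bigr)/(1-z^2)$; your convex-combination check that this $p$ lies in $\mathcal{P}$ is a detail the paper omits. One small correction: the case threshold in Theorem~\ref{th-3.2} comes from $4\beta^2-7\beta+2\le 0$, so it is $\beta\ge\frac{7-\sqrt{17}}{8}\approx 0.36$ rather than the stated $\frac{7-\sqrt7}{8}$, but $\beta=0$ lies in the first case either way, so your upper bound $\frac23$ is unaffected.
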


\section{\bf The sharp generalized Fekete-Szeg\"o inequality for functions in the class $\mathcal{R}$}
The functional $H_{2,1}(f) := a_3 - a_2^2$, known as the classical Fekete--Szeg\H{o} functional, originates from the work of Fekete and Szeg\H{o} \cite{Fekete-Szego_JLMS-1933}, who showed that the extremal behavior of the coefficients $a_n$ in the class $\mathcal{S}$ is not always realized by the Koebe function. Its generalization,
\[
|a_3 - \mu a_2^2|,
\]
where $\mu$ is a real or complex parameter, has since become a central object of study in geometric function theory. The problem of determining sharp bounds for this quantity is commonly referred to as the Fekete--Szeg\H{o} problem.

A complete solution of this problem for the classes $\mathcal{S}^*$ of starlike functions and $\mathcal{K}$ of convex functions was obtained by Keogh and Merkes \cite{Keogh-Merkes-PAMS-1969}. Since then, the Fekete--Szeg\H{o} inequality has been extensively investigated for numerous subclasses of univalent functions; see, for example, \cite{Abdel-Thomas-PAMS-1992, Koepf-PAMS-1987} and the references therein. This functional continues to play a fundamental role in the study of geometric properties of analytic and univalent functions.\vspace{2mm} 

In $2024$, Lecko and Partyka \cite{Lecko-Partyka-BdSM-2024} investigated sharp upper and lower bounds for a generalized Fekete--Szeg\H{o} functional in the class $\mathcal{S}$, defined by
\begin{equation}\label{eq-4.1}
	F_{\lambda,\mu}(f) := |a_3 - \lambda a_2^2| - \mu |a_2|,
\end{equation}
where $\lambda \in \mathbb{C}$ and $\mu \ge 0$. More recently, Bulboac\u{a} \emph{et al.} \cite{Bulboaca-Obradovic-Tuneski-AMP-2025} studied this functional for the class $\mathcal{S}$ as well as for its subclass $\mathcal{K}$ of convex functions.

Motivated by these developments, we establish in this section sharp upper and lower bounds for the functional $F_{\lambda,\mu}$ in the class $\mathcal{R}$ of functions with bounded turning.
\begin{thm}\label{th-4.1}
Let $f\in\mathcal{R}$ be the form \eqref{eq-1.1}, then we have
\begin{align*}
	F_{\lambda,\mu}\leq
	\begin{cases}
		\dfrac{1}{3}\left(|2-3\lambda|-3\mu\right) \hspace{0.5cm} \mbox{for}\hspace{0.3cm} |2-3\lambda|\geq 2+3\mu, \vspace{2mm}\\
		\dfrac{2}{3} \hspace{3.1cm}\mbox{for}\hspace{0.3cm} |2-3\lambda|< 2+3\mu,
	\end{cases}
\end{align*}
and
\begin{align*}
	F_{\lambda,\mu}\geq 
	\begin{cases}
		-\dfrac{1}{3}\left(3\mu-|2-3\lambda|\right) \hspace{1.5cm} \mbox{for}\hspace{0.3cm} |2-3\lambda|\leq \dfrac{3\mu-4}{2}, \vspace{2mm}\\
		-\mu\sqrt{\dfrac{2}{|2-3\lambda|+2}} \hspace{1.7cm} \mbox{for}\hspace{0.3cm} |2-3\lambda|\geq \dfrac{9\mu^2-16}{8}, \vspace{2mm}\\
		-\dfrac{1}{12}\left(8+\dfrac{9\mu^2}{|2-3\lambda|+2}\right) \hspace{0.5cm} \mbox{for}\hspace{0.3cm} \dfrac{3\mu-4}{2}<|2-3\lambda|< \dfrac{9\mu^2-16}{8}.
	\end{cases}
\end{align*}
All the inequalities are sharp.
\end{thm}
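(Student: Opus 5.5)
For $f\in\mathcal{R}$ we take $\beta=0$ in \eqref{eq-2.4}, so $f'(z)=p(z)$ with $p\in\mathcal{P}$. By \eqref{eq-2.5} this gives $a_2=c_1/2$ and $a_3=c_2/3$. Substituting into \eqref{eq-4.1},
\[
F_{\lambda,\mu}(f)=\Bigl|\tfrac{c_2}{3}-\tfrac{\lambda}{4}c_1^2\Bigr|-\tfrac{\mu}{2}|c_1|.
\]
We put this in the form $|B_2c_1^2+B_3c_2|-|B_1c_1|$ of Lemma~\ref{lem-2.1}. The lemma requires $B_1>0$ and $B_3\in\mathbb{R}$, so we factor out a positive constant chosen to make $B_3=1$:
\[
F_{\lambda,\mu}(f)=\tfrac13\Bigl(\bigl|c_2-\tfrac{3\lambda}{4}c_1^2\bigr|-\tfrac{3\mu}{2}|c_1|\Bigr)=\tfrac13\Psi_{+}(c_1,c_2),
\]
with $B_1=\tfrac{3\mu}{2}$, $B_2=-\tfrac{3\lambda}{4}$ and $B_3=1$. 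The degenerate case $\mu=0$, where $B_1=0$, is just Lemma~\ref{lem-2.2} applied with complex $\lambda$, or follows by a limiting argument.

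We then compute the quantities appearing in Lemma~\ref{lem-2.1}:
\[
|2B_2+B_3|=\tfrac12|2-3\lambda|,\qquad B_4=|4B_2+2B_3|=|2-3\lambda|,\qquad |B_3|=1.
\]

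\textbf{Upper bound.} The condition $|2B_2+B_3|\ge |B_3|+B_1$ becomes $|2-3\lambda|\ge 2+3\mu$. In that case the lemma gives $\Psi_+\le |2-3\lambda|-3\mu$, hence $F_{\lambda,\mu}\le \tfrac13(|2-3\lambda|-3\mu)$. Otherwise $\Psi_+\le 2$, hence $F_{\lambda,\mu}\le\tfrac23$. This matches the first display of the theorem.

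\textbf{Lower bound.} We use $F_{\lambda,\mu}=-\tfrac13\Psi_-$ and read off the three cases of \eqref{eq-2.3}.
\begin{itemize}
\item If $B_1\ge B_4+2$, i.e.\ $|2-3\lambda|\le\tfrac{3\mu-4}{2}$, then $F_{\lambda,\mu}\ge-\tfrac13(3\mu-|2-3\lambda|)$.
\item If $B_1^2\le 2(B_4+2)$, i.e.\ $\tfrac{9\mu^2}{4}\le 2(|2-3\lambda|+2)$, equivalently $|2-3\lambda|\ge\tfrac{9\mu^2-16}{8}$, then $\Psi_-\le 3\mu\sqrt{2/(|2-3\lambda|+2)}$. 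This gives the bound $-\mu\sqrt{2/(|2-3\lambda|+2)}$.
\item In the remaining case, $\Psi_-\le 2+\tfrac{9\mu^2/4}{|2-3\lambda|+2}$. Multiplying by $-\tfrac13$ gives $-\tfrac1{12}\bigl(8+\tfrac{9\mu^2}{|2-3\lambda|+2}\bigr)$.
\end{itemize}
The three regions are disjoint and cover every case. Indeed, the first condition forces $\tfrac{9\mu^2}{4}\ge 2(B_4+2)+\dots$, so it cannot overlap the second. This matches Lemma~\ref{lem-2.1}'s ordering of cases, so the case split in the theorem is exactly the lemma's.

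\textbf{Sharpness.} This is the main point needing care. Lemma~\ref{lem-2.1} asserts sharpness over all $p\in\mathcal{P}$. Since $f(z)=\int_0^z p(t)\,dt$ runs over all of $\mathcal{R}$ as $p$ runs over $\mathcal{P}$, every extremal $p$ yields an extremal $f\in\mathcal{R}$. To make this explicit we would exhibit the extremal functions.
\begin{itemize}
\item For $\tfrac13(|2-3\lambda|-3\mu)$: take $p(z)=(1+\epsilon z)/(1-\epsilon z)$ with $|\epsilon|=1$ rotated so that $c_1=2\epsilon$ and $c_2=2\epsilon^2$ give the modulus $\tfrac13|2-3\lambda|$.
\item For $\tfrac23$: take $p(z)=(1+z^2)/(1-z^2)$, which gives $c_1=0$ and $c_2=2$.
\item For the first lower bound: use the same rotated function $p(z)=(1+\epsilon z)/(1-\epsilon z)$.
\item For the middle and last lower bounds: take $p(z)=(1+tz+z^2)/(1-z^2)$ with a suitable $t\in[-2,2]$ and a rotation making $c_2-\tfrac{3\lambda}{4}c_1^2$ small. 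For the middle case the choice $t=2\sqrt{2/(|2-3\lambda|+2)}$ is analogous to the one in the proof of Theorem~\ref{th-3.2}.
\end{itemize}
We would rely on the sharpness statement in Lemma~\ref{lem-2.1} for the exact parameter values in the middle and last lower-bound cases rather than recompute them in detail.
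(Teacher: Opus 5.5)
Your proposal is correct and is the paper's proof up to normalization: the paper writes $F_{\lambda,\mu}(f)=\tfrac{1}{12}\Psi_{+}(c_1,c_2)$ with $B_1=6\mu$, $B_2=-3\lambda$, $B_3=4$ (four times your constants) and reads off the same cases of Lemma~\ref{lem-2.1}. Your transfer of sharpness through the bijection $p\mapsto\int_0^z p(t)\,dt$ from $\mathcal{P}$ onto $\mathcal{R}$ makes explicit what the paper leaves implicit for the middle and last lower bounds.

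One caution concerns your illustrative extremals for those two cases. A rotation does not change $|c_2-\tfrac{3\lambda}{4}c_1^2|$, and $(1+tz+z^2)/(1-z^2)$ always has $c_2=2$, so that family is extremal only when $2-3\lambda$ is a nonpositive real number. In general (for $\lambda\neq 2/3$) you need $c_1=t$ and $c_2=\tfrac{t^2}{2}+\tfrac{4-t^2}{2}\zeta$ with $\zeta=-(2-3\lambda)/|2-3\lambda|$. This is realized, for example, by $p=(1+\omega)/(1-\omega)$ with $\omega(z)=z\,\frac{\zeta z+t/2}{1+t\zeta z/2}$. So it is the sharpness clause of Lemma~\ref{lem-2.1}, as you say, that actually carries this part.
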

\begin{proof}[\bf Proof of Theorem \ref{th-4.1}]
Let $f \in \mathcal{R}$ be of the form \eqref{eq-1.1}. Then
\begin{equation}\label{eq-4.2}
	f'(z) = p(z), \qquad z \in \mathbb{D},
\end{equation}
where $p$ is defined in \eqref{eq-2.1}. By comparing coefficients in the series expansions of $f$ and $p$, we obtain
\begin{equation}\label{eq-4.3}
	a_2 = \frac{c_1}{2}, \qquad a_3 = \frac{c_2}{3}, \qquad a_4 = \frac{c_3}{4}.
\end{equation}

In view of \eqref{eq-4.1} and \eqref{eq-4.3}, we have
\begin{align}\label{eq-4.4}
	F_{\lambda,\mu}(f)
	&= \left| \frac{c_2}{3} - \lambda \frac{c_1^2}{4} \right| - \mu \left| \frac{c_1}{2} \right| \nonumber \\
	&= \frac{1}{12} \left( \left| -3\lambda c_1^2 + 4c_2 \right| - \left| 6\mu c_1 \right| \right) \nonumber \\
	&= \frac{1}{12} \left( |B_2 c_1^2 + B_3 c_2| - |B_1 c_1| \right)
	:= \frac{1}{12}\,\Psi_{+}(c_1,c_2),
\end{align}
where
\[
B_1 := 6\mu, \qquad B_2 := -3\lambda, \qquad B_3 := 4.
\]
We first derive the upper bound. A direct computation shows that
\[
|2B_2 + B_3| - |B_3| - B_1 = 2\bigl(|2-3\lambda| - 2 - 3\mu\bigr).
\]
If $|2-3\lambda| \ge 2 + 3\mu$, then $|2B_2 + B_3| \ge |B_3| + B_1$, and an application of Lemma~\ref{lem-2.1} yields
\begin{equation}\label{eq-4.5}
	\Psi_{+}(c_1,c_2) \le 4\bigl(|2-3\lambda| - 3\mu\bigr).
\end{equation}
Consequently, by \eqref{eq-4.4}, we obtain
\[
F_{\lambda,\mu}(f) \le \frac{1}{3}\bigl(|2-3\lambda| - 3\mu\bigr).
\]
The bound is sharp for the function $f_1 \in \mathcal{R}$ defined by \eqref{eq-2.6}.

If $|2-3\lambda| < 2 + 3\mu$, then $|2B_2 + B_3| < |B_3| + B_1$, and Lemma~\ref{lem-2.1} gives
\begin{equation}\label{eq-4.6}
	\Psi_{+}(c_1,c_2) \le 8.
\end{equation}
Thus, by \eqref{eq-4.4},
\[
F_{\lambda,\mu}(f) \le \frac{2}{3}.
\]
This estimate is sharp for the function $f_2 \in \mathcal{R}$ defined in \eqref{eq-2.7}.

Next, we establish the lower bound. A simple computation shows that
\[
B_4 = |8 - 12\lambda|, \qquad B_1 - B_4 - 2|B_3| = 6\mu - 8 - |8 - 12\lambda|.
\]
If $|2-3\lambda| \le \frac{3\mu - 4}{2}$, then $B_1 \ge B_4 + 2|B_3|$, and Lemma~\ref{lem-2.1} yields
\[
\Psi_{-}(c_1,c_2) \le 4\bigl(3\mu - |2-3\lambda|\bigr).
\]
Since $\Psi_{+}(c_1,c_2) = -\Psi_{-}(c_1,c_2)$, we obtain
\begin{equation}\label{eq-4.7}
	\Psi_{+}(c_1,c_2) \ge -4\bigl(3\mu - |2-3\lambda|\bigr).
\end{equation}
Hence, by \eqref{eq-4.4},
\[
F_{\lambda,\mu}(f) \ge -\frac{1}{3}\bigl(3\mu - |2-3\lambda|\bigr).
\]
The bound is sharp for the function $f_1 \in \mathcal{R}$.

Furthermore, we observe that
\[
B_1^2 - 2|B_3|(B_4 + 2|B_3|) = 36\mu^2 - 64 - 32|2-3\lambda|.
\]
If $|2-3\lambda| \ge \frac{9\mu^2 - 16}{8}$, then $B_1^2 \le 2|B_3|(B_4 + 2|B_3|)$, and Lemma~\ref{lem-2.1} gives
\[
\Psi_{-}(c_1,c_2) \le 12\mu \sqrt{\frac{2}{|2-3\lambda| + 2}}.
\]
Thus,
\begin{equation}\label{eq-4.8}
	\Psi_{+}(c_1,c_2) \ge -12\mu \sqrt{\frac{2}{|2-3\lambda| + 2}},
\end{equation}
and hence
\[
F_{\lambda,\mu}(f) \ge -\mu \sqrt{\frac{2}{|2-3\lambda| + 2}}.
\]
The estimate is sharp for an appropriate extremal function in $\mathcal{R}$.

Finally, if $\frac{3\mu - 4}{2} < |2-3\lambda| < \frac{9\mu^2 - 16}{8}$, then Lemma~\ref{lem-2.1} yields
\[
\Psi_{-}(c_1,c_2) \le 8 + \frac{9\mu^2}{|2-3\lambda| + 2}.
\]
Consequently,
\begin{equation}\label{eq-4.9}
	\Psi_{+}(c_1,c_2) \ge -\left(8 + \frac{9\mu^2}{|2-3\lambda| + 2}\right),
\end{equation}
and therefore
\[
F_{\lambda,\mu}(f) \ge -\frac{1}{12}\left(8 + \frac{9\mu^2}{|2-3\lambda| + 2}\right).
\]
The sharpness follows from the corresponding extremal construction. This completes the proof.
\end{proof}

\section{Concluding remarks and open problems}

The class $\mathcal{A}_{\beta}$ is defined by the condition
\[
\Re\!\left(\beta \frac{f(z)}{z} + (1-\beta)f'(z)\right) > 0.
\]
In this paper, we have carried out a systematic investigation of coefficient functionals associated with the filtration $\mathcal{A}_{\beta}$ of holomorphic generators. In particular, we established sharp bounds for the initial logarithmic coefficients $\gamma_n$, the inverse coefficients $A_n$, and the successive differences $|A_{n+1}| - |A_n|$. These results quantify how the geometric constraints evolve as the parameter $\beta$ varies from the class $\mathcal{R}$ of functions with bounded turning to the broader class $\mathcal{G}_0$ of semigroup generators.

The family $\mathcal{A}_{\beta}$ forms a nested filtration satisfying
\[
\mathcal{A}_{\beta_1} \subsetneq \mathcal{A}_{\beta_2} \subsetneq \mathcal{G}_0, 
\qquad 0 \le \beta_1 < \beta_2 < 1,
\]
which provides a natural framework to study the transition of geometric and analytic properties.

In the limiting case $\beta = 0$, the class $\mathcal{A}_{\beta}$ reduces to $\mathcal{R}$, and our results recover the classical sharp estimates $|\gamma_m| \le \frac{1}{m+1}$. On the other hand, as $\beta \to 1^{-}$, the bounds for the logarithmic coefficients satisfy
\[
|\gamma_m| \le \frac{1}{m+1 - m\beta} \to 1,
\]
reflecting the transition toward the class of holomorphic generators.

A similar limiting behavior is observed for the inverse coefficients. If $f^{-1}(w) = w + \sum_{n=2}^\infty A_n w^n$, then
\[
\lim_{\beta \to 1^{-}} |A_2| = 2, 
\qquad 
\lim_{\beta \to 1^{-}} |A_3| = 6,
\]
which coincide with the corresponding bounds known for the class $\mathcal{S}$ and its subclasses.

Despite the sharp estimates obtained in this work, several problems remain open.

\medskip

\noindent\textbf{(i) Higher-order successive coefficients.}
In Theorem~3.2, we established sharp bounds for the initial differences $|A_2| - |A_1|$ and $|A_3| - |A_2|$. It would be of interest to determine whether analogous sharp estimates hold for $|A_{n+1}| - |A_n|$ when $n \ge 3$, and whether there exists a function $C(\beta)$ such that
\[
\bigl||A_{n+1}| - |A_n|\bigr| \le C(\beta)
\]
for all $n$.

\medskip

\noindent\textbf{(ii) Higher-order logarithmic inverse coefficients.}
While sharp bounds have been obtained for the initial logarithmic inverse coefficients $\Gamma_n$, the determination of sharp estimates for $|\Gamma_n|$ for all $n \in \mathbb{N}$ remains an open problem.

\medskip

\noindent\textbf{(iii) Bohr-type phenomena.}
It is natural to investigate Bohr-type inequalities for the class $\mathcal{A}_{\beta}$. In particular, one may ask:

\begin{prob}
	For $f \in \mathcal{A}_{\beta}$, determine the largest radius $r_p$ and parameter $p$ such that
	\[
	\sum_{n=1}^\infty |a_n| r^n + p \left( \frac{1}{1-r} \sum_{n=1}^\infty |a_n|^2 r^{2n} \right) \le 1,
	\quad |z| = r \le r_p.
	\]
\end{prob}

\begin{prob}
	Let $f^{-1}(w) = w + \sum_{n=2}^\infty A_n w^n$ for $f \in \mathcal{A}_{\beta}$. Determine the largest radius $R_\beta$ such that
	\[
	|w| + \sum_{k=2}^n |A_k| |w|^k \le 1
	\quad \text{for } |w| < R_\beta.
	\]
\end{prob}

\begin{prob}
	Determine whether the logarithmic coefficients satisfy a Bohr-type inequality of the form
	\[
	\sum_{n=1}^\infty |\gamma_n| r^n \le \log\!\left(\frac{1}{1-r}\right)
	\]
	for $r \le r_0(\beta)$, and find the largest admissible radius $r_0(\beta)$.
\end{prob}

\noindent{\bf Acknowledgment:} The authors would like to thank the referee(s) for their helpful suggestions and comments for the improvement of the exposition of the paper.\\

\noindent{\bf Author Contributions:} Both authors actively worked on the research contained in the paper. Both authors reviewed the manuscript.\\

\noindent\textbf{Compliance of Ethical Standards:}\\

\noindent\textbf{Conflict of interest.} The authors declare that there is no conflict  of interest regarding the publication of this paper.\vspace{1.5mm}

\noindent\textbf{Data availability statement.}  Data sharing is not applicable to this article as no datasets were generated or analyzed during the current study.

\vspace{1.5mm}

\begin{thebibliography}{99}	
	
	\bibitem{Abdel-Thomas-PAMS-1992} {\sc H. R. Abdel-Gawad} and {\sc D. K. Thomas}, The Fekete-Szeg$\ddot{o}$ problem for strongely closeto-convex functions, \textit{Proc. Amer. Math. Soc.} \textbf{114} (1992), no. 2, 345–349.
	
	\bibitem{Ali-BMMSS-2001} {\sc R. Ali}, Coefficients of the inverse of strongly starlike functions, \textit{Bull. Malays. Math. Sci. Soc.} \textbf{26} (2003), 63–71.
	
	\bibitem{Ali-Allu-PAMS-2018} {\sc M. F. Ali} and {\sc V. Allu}, On logarithmic coefficients of some close-to-convex functions, \textit{Proc. Amer. Math. Soc.} \textbf{146} (2018), 1131–1142. 
	
	\bibitem{Berkson-Porta-MMJ-1978} {\sc E. Berkson} and {\sc H. Porta}, Semigroups of analytic functions and composition operators, \textit{Michigan Math. J.} \textbf{25}:1 (1978), 101–115.
	
	\bibitem{Bracci-Contreras-Diaz-JRAM-2012} {\sc F. Bracci, M. D. Contreras} and {\sc S. Díaz-Madrigal}, Evolution families and the Loewner equation, I: the unit disc, \textit{J. Reine Angew. Math.} \textbf{672} (2012), 1–37.
	
	\bibitem{Bracci et. al.-FACM-2018} {\sc F. Bracci, M. D. Contreras, S. Díaz-Madrigal, M. Elin} and {\sc D. Shoikhet}, Filtrations of infinitesimal generators, \textit{Funct. Approx. Comment. Math.} \textbf{59}:1 (2018), 99–115.
	
	\bibitem{Bracci-Contreras-Diaz-2020} {\sc F. Bracci, M. D. Contreras} and {\sc S. Díaz-Madrigal}, Continuous semigroups of holomorphic self-maps of the unit disc, Springer, 2020.
	
	\bibitem{Branges-AM-1985} {\sc L. de Branges}, A proof of the Bieberbach conjecture, \textit{Acta Math.} \textbf{154} (1985), 137-152.
	
	\bibitem{Bulboaca-Obradovic-Tuneski-AMP-2025} {\sc T. Bulboacă, M. Obradović} and {\sc N. Tuneski}, Simple proofs of certain results on generalized Fekete-Szegő functional in the class $\mathcal{S}$, \textit{Anal. Math. Phys.} \textbf{15}, 102 (2025).
	
	\bibitem{Cho-Kowalczyk-kwon-Lecko-Sim-RACSAM-2020} {\sc N. E. Cho, B. Kowalczyk, O. S. Kwon, A. Lecko} and {\sc Y. J. Sim}, On the third logarithmic coefficient in some subclasses of close-to-convex functions, \textit{Rev. R. Acad. Cienc. Exactas Fís. Nat.(Esp.)} \textbf{114}, Art: 52, (2020), 1–14.
	
	\bibitem{Duren-1983-NY}{\sc P. T. Duren}, Univalent Functions. \textit{Springer-Verlag}, New York Inc (1983).
	
	\bibitem{Elin-Reich-Shoikhet-2019} {\sc M. Elin, S. Reich} and {\sc D. Shoikhet}, Numerical range of holomorphic mappings and applications, Springer, 2019.
	
	\bibitem{Elin-Shoiket-2010} {\sc M. Elin and D. Shoikhet}, Linearization models for complex dynamical systems: topics in univalent functions, functional equations and semigroup theory, linear operators and Linear Systems, Operator Theory: Advances and Applications 208, Birkhäuser, Basel, 2010.
	
	\bibitem{Elin-Shoikhet-Sugawa-2018} {\sc M. Elin, D. Shoikhet} and {\sc T. Sugawa}, Filtration of semi-complete vector fields revisited, pp. 93–102 in Complex analysis and dynamical systems, Springer, 2018.
	
	\bibitem{Elin-Shoikhet-Tuneski-2020} {\sc M. Elin, D. Shoikhet} and {\sc N. Tuneski}, Radii problems for starlike functions and semigroup generators, \textit{Comput. Methods Funct. Theory} \textbf{20}:2 (2020), 297–318.
	
	\bibitem{Fekete-Szego_JLMS-1933} {\sc M. Fekete} and {\sc G. Szeg$ \ddot{o}$}, Eine Bemerkunguber ungerade schlichte Funktionen, \textit{J. Lond. Math. Soc.} \textbf{8}(1933), 85-89.
	
	\bibitem{Girela-AASF-2000} {\sc D. Girela}, Logarithmic coefficients of univalent functions, \textit{Ann. Acad. Sci. Fenn.} \textbf{25} (2000), 337–350. 
	
	\bibitem{Giri-Kumar-RMJ-2025} {\sc S. Giri} and {\sc S. S. S. Kumar}, Coefficient functionals and Bohr–Rogosinski phenomenon for analytic functions involving semigroup generators \textit{Rocky Mountain J. Math.} \textbf{55}(5) 1315-1329.
	
	\bibitem{Goluzin-1946} {\sc G. M. Goluzin}, On distortion theorems and coefficients of univalent functions, \textit{Mat. Sb.} \textbf{19}(61)(1946), 183–202 (in Russian).
	
	\bibitem{Goodman-1983} {\sc A. W. Goodman}, Univalent Functions (Mariner, Tampa, FL, 1983).
	
	\bibitem{Grinspan-1976} {\sc A. Z. Grinspan}, Improved bounds for the difference of adjacent coefficients of univalent functions (Russian), Questions in the modern theory of functions (Novosibirsk), \textit{Sib. Inst. Mat.} \textbf{38} (1976), 41-45.
	
	\bibitem{Hayman-1963} {\sc W. K. Hayman}, On successive coefficients of univalent functions, \textit{J. London. Math. Soc.} \textbf{38}(1963), 228--243. 
	
	\bibitem{Keogh-Merkes-PAMS-1969} {\sc F. R. Keogh} and {\sc E. P. Merkes}, A coefficient inequality for certain classes of analytic functions, \textit{Proc. Amer. Math. Soc.} \textbf{20} (1969), 8–12.
	
	\bibitem{Koepf-PAMS-1987} {\sc W. Koepf}, On the Fekete-Szegö problem for close-to-convex functions, \textit{Proc. Amer. Math. Soc.} \textbf{101} (1987) 89–95.
	
	\bibitem{Lecko-Partyka-BdSM-2024} {\sc A. Lecko} and {\sc D. Partyka}, A generalized Fekete-Szeg\"o functional and initial successive coefficients of univalent functions, \textit{Bull. Sci. Math.} \textbf{197}, 103527 (2024).
	
	\bibitem{Leung-BLMS-1978} {\sc Y. Leung}, Successive coefficients of starlike functions, \textit{Bull. Lond. Math. Soc.} \textbf{10} (1978), 193--196.
	
	\bibitem{Li-Sugawa-CMFT-2017} {\sc M. Li} and {\sc T. Sugawa}, A note on successive coefficients of convex functions, \textit{Comput. Methods Funct. Theory} \textbf{17}:2 (2017), 179–193.
	
	\bibitem{Lowner-IMA-1923} {\sc K. L$\ddot{\mbox{o}}$wner}, Untersuchungen $\ddot{\mbox{u}}$ber schlichte konforme Abbildungen des Einheitskreises, \textit{I. Math. Ann.} \textbf{89} (1923), 103–121.
	
	\bibitem{Ma-Minda-1994} {\sc W. C. Ma} and {\sc D. Minda}, A unified treatment of some special classes of univalent functions, pp. 157–169 in Proceedings of the conference on complex analysis (Tianjin, 1992), Conf. Proc. Lecture Notes Anal. 1, Int. Press, Cambridge, MA, 1994.
	
	\bibitem{Ch. Pommerenke-1971} {\sc Ch. Pommerenke}, Probleme aus der Funktionentheorie, \textit{Jber. Deutsch. Math.-Verein.} 73 (1971), 1--5.
	
	\bibitem{Ponnusamy-Sharma-Wirths-RM-2018} {\sc S. Ponnusamy, N. L. Sharma} and {\sc KJ. Wirths}, Logarithmic Coefficients of the Inverse of Univalent Functions, \textit{Results Math} \textbf{73}, 160 (2018).
	
	\bibitem{Roth-PAMS-2007} {\sc  O. Roth}, A sharp inequality for the logarithmic coefficients of univalent functions, \textit{Proc. Amer. Math. Soc.} \textbf{135}(2007), 2051-2054.
	
	\bibitem{Sim-Thomas-S-2020} {\sc Y. J. Sim} and {\sc D. K. Thomas}, On the difference of inverse coefficients of univalent functions, \textit{Symmetry} \textbf{12}(12) (2020).
	
	\bibitem{Shoikhet-2001} {\sc D. Shoikhet}, Semigroups in geometrical function theory, Kluwer Academic Publishers, Dordrecht, 2001.
	
	\bibitem{Shoikhet-MJM-2016} {\sc D. Shoikhet}, Rigidity and parametric embedding of semi-complete vector fields on the unit disk, \textit{Milan J. Math.} \textbf{84}:1 (2016), 159–202.
	
	\bibitem{Thomas-PAMS-2016} {\sc D. K. Thomas}, On logarithmic coefficients of close to convex functions, \textit{Proc. Amer. Math. Soc.} \textbf{144} (2016), 1681–1687.
\end{thebibliography}
\end{document}